\documentclass[11pt, leqno]{article}

\usepackage{amsmath,amssymb,amsthm, epsfig}

\usepackage[numbers]{natbib}

\usepackage{hyperref}
\usepackage[nameinlink]{cleveref}
\hypersetup{colorlinks=true,linkcolor=blue,citecolor=blue,urlcolor=blue}

\usepackage{amsmath}
\theoremstyle{plain}
\theoremstyle{definition}
\allowdisplaybreaks
\usepackage[pdftex]{color}
\usepackage{comment}

\usepackage{mathtools}
\usepackage{color}

\usepackage{ulem}

\usepackage{mathrsfs}

\usepackage{wasysym}

\usepackage{stackrel}

\usepackage{amsthm} 

\newcommand{\mint}{-\; \!\! \!\!\!\!\!\!\int}
\newtheorem{assumptionBase}{\hspace{-5pt}} 

\usepackage{pgfplots}
\pgfplotsset{compat=newest}

\title{\large{\bf Bernoulli problem for the fractional $p$-Laplacian}}
\author{Rafael R. Costa\footnote{\noindent Universidade Federal do Rio Grande do Sul, Porto Alegre-RS, Brazil. {E-mail: \url{ramos.costa@ufrgs.br}}} \and Diego Marcon\footnote{\noindent Universidade Federal do Rio Grande do Sul, Porto Alegre-RS, Brazil. {E-mail: \url{diego.marcon@ufrgs.br}}}}

\newlength{\hchng}
\newlength{\vchng}
\newcommand{\dd}{\mathrm{d}}

\newtheorem{theorem}{Theorem}[section]

\newtheorem{lemma}[theorem]{Lemma}

\newtheorem{proposition}[theorem]{Proposition}

\newtheorem{corollary}[theorem]{Corollary}

\theoremstyle{definition}

\newtheorem{definition}[theorem]{Definition}

\theoremstyle{remark}

\newtheorem{remark}[theorem]{Remark}

\numberwithin{equation}{section}

\newcommand{\intav}[1]{\mathchoice {\mathop{\vrule width 6pt height 3 pt depth  -2.5pt
\kern -8pt \intop}\nolimits_{\kern -6pt#1}} {\mathop{\vrule width
5pt height 3  pt depth -2.6pt \kern -6pt \intop}\nolimits_{#1}}
{\mathop{\vrule width 5pt height 3 pt depth -2.6pt \kern -6pt
\intop}\nolimits_{#1}} {\mathop{\vrule width 5pt height 3 pt depth
-2.6pt \kern -6pt \intop}\nolimits_{#1}}}

\begin{document}

\maketitle
\begin{abstract}
		We study regularity properties of minimizers for the one-phase Alt--Caffarelli problem associated with the fractional $p$-Laplacian in the range $p\geq2$. We consider minimizers of the fractional $p$-energy penalized by the measure of the positivity set, with prescribed nonnegative exterior datum. We prove existence of minimizers and derive their basic variational properties: minimizers are nonnegative and are weak subsolutions of the homogeneous fractional $p$-Laplace equation. The main regularity argument combines fractional $p$-harmonic replacements, energy-gap estimates, nonlocal tail bounds, and a Campanato-type iteration. This yields local Hölder continuity of minimizers and implies that they solve the homogeneous equation in their positivity set. Finally, we prove the optimal free boundary growth estimate, showing that the sharp order known in the linear fractional Bernoulli problem persists in the fractional $p$-Laplacian setting.

		\medskip
		\noindent \textbf{Keywords}: Alt--Caffarelli problem;
		Fractional $p$-Laplacian;
		Free boundary problem;
		Hölder regularity;
		Optimal growth.
		\vspace{0.2cm}
		
		\noindent \textbf{2020 Mathematics Subject Classification:} 35R35, 35R11, 35J92, 35B65, 49J40.
	\end{abstract}


	\section{Introduction}
	
	We study a one-phase Alt--Caffarelli type free boundary problem, also known as one-phase Bernoulli problem, driven by
	the fractional $p$-Laplacian with $p\geq2$. Let $\Omega\subset\mathbb{R}^n$ be a bounded
	domain, $s\in(0,1)$, and $M>0$. Given a nonnegative exterior
	datum $g\in \mathrm{W}^{s,p}(\mathbb{R}^n)$, we consider minimizers of
	\begin{equation}\label{main functional}
	\mathcal{I}_M(u)
	:= \int_{\mathbb{R}^n}\int_{\mathbb{R}^n}
	\frac{|u(x)-u(y)|^p}{|x-y|^{n+sp}}
	\,\dd x\,\dd y
	+
	M|\{u>0\}\cap\Omega|    
	\end{equation}
	over the admissible class
	\begin{equation}\label{minimizer}
	    \mathcal A_g(\Omega)
	:=
	\left\{
	v\in \mathrm{W}^{s,p}(\mathbb{R}^n) \, ; ~
	v=g \ \text{a.e. in } \mathbb{R}^n\setminus\Omega
	\right\}.
	\end{equation}
	Equivalently, since the exterior datum $g$ is fixed, one may subtract from the
	energy the fixed interaction over $\Omega^c\times\Omega^c$ and work with
	the integral over
	$\mathbb{R}^{2n}\setminus(\Omega^c\times\Omega^c)$. Both formulations
	give the same minimizers.
	
	The local one-phase Alt--Caffarelli
	problem originates in the seminal work \cite{AC81}, where Alt and Caffarelli develop the regularity theory for minimizers of the Dirichlet
	energy penalized by the measure of the positivity set. The two-phase
	version is treated by Alt, Caffarelli, and Friedman in \cite{ACF84}.
	For quasilinear diffusion, Danielli and Petrosyan study in \cite{DP01}
	the corresponding problem for the $p$-Laplacian, associated with the
	functional
	\[
	J(u)
	=
	\int_{\Omega}|\nabla u|^p\,\dd x
	+
	\lambda^p|\{u>0\}\cap\Omega|.
	\]
	In the positivity set, minimizers are $p$-harmonic, while on the free
	boundary one expects a Bernoulli-type condition. The local theory is
	extensive; we refer to \cite{CS05,D23,V23} for background, further
	developments, and geometric aspects of the free boundary.
	
	The nonlocal one-phase Bernoulli problem is introduced by Caffarelli,
	Roquejoffre, and Sire in \cite{CRS10} for the fractional Laplacian. In
	that case, the model energy is
	\[
	\mathcal J(u)
	=
	\int_{\mathbb{R}^n}\int_{\mathbb{R}^n}
	\frac{|u(x)-u(y)|^2}{|x-y|^{n+2s}}
	\,\dd x\,\dd y
	+
	|\{u>0\}\cap\Omega|.
	\]
	The nonlocal character of the energy creates a fundamental difficulty:
	local estimates depend not only on values of the function near the point
	under consideration, but also on values far away. This influence is
	measured by nonlocal tails. In the fractional $p$-Laplacian setting, the
	tail is given by
	\[
	\operatorname{Tail}(u;x_0,r)
	:=
	\left(
	r^{sp}
	\int_{\mathbb{R}^n\setminus B_r(x_0)}
	\frac{|u(y)|^{p-1}}{|y-x_0|^{n+sp}}
	\,\dd y
	\right)^{1/(p-1)}.
	\]

The work \cite{CRS10} uses the Caffarelli--Silvestre extension
\cite{CS07}, which transforms the fractional Laplacian into a local
degenerate elliptic operator in one higher dimension. This approach leads
to optimal regularity, nondegeneracy, and the classification of global
solutions. In dimension two, it also gives $\mathrm{C}^{1}$ regularity of Lipschitz
free boundaries. In a related optimal-design direction, Teixeira and
Teymurazyan \cite{TT15} use the extension method to study fractional
diffusion problems with volume constraints, proving existence, optimal
regularity, and free-boundary regularity outside a negligible singular set. Back to the fractional Bernoulli problem, the free boundary regularity theory is developed further, first
for $s=1/2$ and then for the full range $s\in(0,1)$; see
\cite{DR12,DS12,DS15b,DSS14,DS15a,EKPSS21}.

	More recently, Ros-Oton and Weidner \cite{RO24,RW27} and Snelson and
	Teixeira \cite{ST25} study nonlocal Alt--Caffarelli type problems for
	more general linear integro-differential operators. In this setting, the
	kernels need not be translation invariant, and the Caffarelli--Silvestre
	extension is no longer available. Thus, the analysis must address the
	nonlocal structure directly. Ros-Oton and Weidner prove regularity results
	for minimizers and apply them to boundary $\mathrm{C}^{s}$ estimates for nonlocal
	elliptic equations with kernels comparable to $|y|^{-n-2s}$; they also
	obtain $\mathrm{C}^{1,\alpha}$ regularity of the free boundary near regular points.
	Snelson and Teixeira treat a broader class of kernels and establish local
	Hölder continuity, optimal regularity along the free boundary, and
	nondegeneracy by combining scaling arguments with the regularity theory
	for the corresponding nonlocal equations.
	
	Compared with the linear fractional case, the free boundary theory for
	the fractional $p$-Laplacian is still much less developed. Related
	nonlinear nonlocal problems appear, for instance, in the obstacle problem
	for the fractional $p$-Laplacian \cite{KKP16b,KKP16a} and in the
	$p$-fractional optimal design problem studied in \cite{SR19}. To the best
	of our knowledge, however, the Alt--Caffarelli free boundary problem
	associated with the fractional $p$-Laplacian has not been systematically
	developed. The purpose of this paper is to initiate this study by proving
	existence, basic variational properties, local Hölder regularity, and the
	optimal growth of minimizers at free boundary points.
	
	We now state the main results. The first one gives the basic variational
	framework.
	
	\begin{theorem}[Existence and first properties]\label{thm:intro-existence}
		Let $\Omega\subset\mathbb{R}^n$ be a bounded domain, let $s\in(0,1)$, and let $g\in \mathrm{W}^{s,p}(\mathbb{R}^n)$ satisfy
        $g\geq0$ a.e{.} in $\mathbb{R}^n\setminus\Omega$. Then, there exists a
		minimizer of $\mathcal I_M$ in $\mathcal A_g(\Omega)$. Moreover, every
		minimizer $u$ satisfies $u\geq0$ a.e. in $\mathbb{R}^n$ and is a weak
		subsolution of the homogeneous fractional $p$-Laplace equation in
		$\Omega$.
	\end{theorem}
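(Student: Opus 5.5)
The proof has three parts: existence by the direct method, nonnegativity by truncation, and the subsolution property by one-sided variations.

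\emph{Existence.} The admissible class $\mathcal A_g(\Omega)$ is nonempty, since $g$ itself belongs to it and $\mathcal I_M(g)<\infty$. So $m:=\inf_{\mathcal A_g(\Omega)}\mathcal I_M<\infty$. Take a minimizing sequence $(u_k)$. The Gagliardo seminorms $[u_k]_{\mathrm{W}^{s,p}(\mathbb{R}^n)}$ are then bounded. Since $u_k-g$ vanishes outside the bounded set $\Omega$, a fractional Poincaré inequality for functions vanishing off $\Omega$ bounds $\|u_k-g\|_{\mathrm{L}^p}$ by $[u_k-g]_{\mathrm{W}^{s,p}}$. Hence $(u_k)$ is bounded in $\mathrm{W}^{s,p}(\mathbb{R}^n)$.

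By reflexivity ($p\ge2$) and the compact embedding $\mathrm{W}^{s,p}\hookrightarrow \mathrm{L}^p$ on bounded sets, applied to $u_k-g$, we can pass to a subsequence with $u_k\rightharpoonup u$ weakly in $\mathrm{W}^{s,p}$, $u_k\to u$ in $\mathrm{L}^p(\Omega)$, and $u_k\to u$ a.e. The constraint $u=g$ a.e. in $\Omega^c$ passes to the limit, so $u\in\mathcal A_g(\Omega)$. The double integral is lower semicontinuous by Fatou's lemma, using a.e. convergence of $(x,y)\mapsto u_k(x)-u_k(y)$. For the measure term, the pointwise bound $\chi_{\{u>0\}}\le\liminf_k\chi_{\{u_k>0\}}$ holds a.e.: if $u(x)>0$ then $u_k(x)>0$ for all large $k$. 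Fatou's lemma on $\Omega$ then gives $|\{u>0\}\cap\Omega|\le\liminf_k|\{u_k>0\}\cap\Omega|$. Therefore $\mathcal I_M(u)\le m$, and $u$ is a minimizer.

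\emph{Nonnegativity.} Let $u$ be a minimizer and compare it with $u^+=\max(u,0)$. Since $g\ge0$ in $\Omega^c$, we have $u^+=g$ there, so $u^+\in\mathcal A_g(\Omega)$. The positivity sets agree, $\{u^+>0\}=\{u>0\}$. Also $|u^+(x)-u^+(y)|\le|u(x)-u(y)|$ pointwise, because $t\mapsto t^+$ is $1$-Lipschitz. This inequality is strict whenever $u(x)<0<u(y)$ or $u(x)<u(y)<0$, and symmetrically with $x,y$ swapped. If $|\{u<0\}|>0$, then pairing $\{u<0\}$ with $\Omega^c$ already gives a set of positive measure in $\mathbb{R}^{2n}$ where the inequality is strict. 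To see this, note that $\{u<0\}\subset\Omega$ up to a null set. On $\Omega^c$ we have $u=g\ge0$, and whenever $u(x)<0\le u(y)$ the strict inequality $|u^+(x)-u^+(y)|=u(y)<u(y)-u(x)$ holds, even if $u(y)=0$. So $\mathcal I_M(u^+)<\mathcal I_M(u)$, which contradicts minimality. Hence $u\ge0$ a.e.

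\emph{Weak subsolution.} Fix $\varphi\in \mathrm{W}^{s,p}(\mathbb{R}^n)$ with $\varphi\ge0$ and $\varphi=0$ outside $\Omega$; by density, $\varphi\in \mathrm{C}_c^\infty(\Omega)$ suffices. For $t>0$, the competitor $u-t\varphi$ lies in $\mathcal A_g(\Omega)$, and $\{u-t\varphi>0\}\subset\{u>0\}$. The measure term therefore does not increase, and minimality gives
\[
\int\!\!\int\frac{|u(x)-u(y)-t(\varphi(x)-\varphi(y))|^p-|u(x)-u(y)|^p}{|x-y|^{n+sp}}\,\dd x\,\dd y\;\ge\;0 .
\]
Divide by $t$ and let $t\downarrow0$. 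Dominated convergence applies because of the convexity bound $\big||a-tb|^p-|a|^p\big|\le p\,t\,|b|\,(|a|+|b|)^{p-1}$ for $t\le1$, and $|a|^{p-1}|b|$ is integrable against the kernel by Hölder's inequality. The limit is
\[
\int\!\!\int\frac{|u(x)-u(y)|^{p-2}(u(x)-u(y))(\varphi(x)-\varphi(y))}{|x-y|^{n+sp}}\,\dd x\,\dd y\;\le\;0 ,
\]
which is exactly the weak formulation of $(-\Delta_p)^s u\le0$ in $\Omega$.

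The only genuinely delicate point is the lower semicontinuity of the measure term. It is handled by the a.e. convergence together with Fatou's lemma applied to the indicator functions. The remaining steps are routine, though the Poincaré inequality for functions vanishing outside $\Omega$ should be quoted carefully.
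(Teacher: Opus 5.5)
Your proposal is correct and follows essentially the same route as the paper. For existence, you use the direct method: the Poincaré inequality for $u_k-g$, the compact embedding, and Fatou's lemma for both terms. For nonnegativity, you compare with $u_+$ and use the strictly decreased interaction between $\{u<0\}\cap\Omega$ and $\mathbb{R}^n\setminus\Omega$. For the subsolution property, you use the one-sided variation $u-t\varphi$ with $t>0$.

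You also supply details the paper leaves implicit: the dominated-convergence bound that justifies differentiating at $t=0$, and the pointwise Fatou argument for the measure term. Your reduction to $\varphi\in\mathrm{C}_c^\infty(\Omega)$ ``by density'' is unnecessary. Your argument works verbatim for every nonnegative $\varphi\in\mathrm{W}^{s,p}_0(\Omega)$, whereas that density can fail when $\Omega$ is irregular.
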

	
	The subsolution property is the first link between the variational free
	boundary problem and the regularity theory for the fractional
	$p$-Laplacian. In particular, it yields local boundedness of minimizers.
	Our next main result proves local Hölder continuity. The proof compares a
	minimizer with its fractional $p$-harmonic replacement, controls the
	energy gap through the volume term, and then applies a Campanato-type
	iteration.
	
	\begin{theorem}[Local Hölder regularity]\label{thm:intro-holder}
		Let $u$ be a minimizer of $\mathcal I_M$ in $\Omega$. Then, there exist
		$\alpha_0\in(0,1)$ and $c>0$, depending only on $n,p$, and $s$, such
		that, for every ball $B_{4R}(x_0)\subset\Omega$,
		\[
		[u]_{\mathrm{C}^{\alpha_0}(B_R(x_0))}
		\leq
		\frac{c}{R^{\alpha_0}}
		\left[
		\operatorname{Tail}(u;x_0,2R)
		+
		\left(
		\mint_{B_{4R}(x_0)}
		|u|^p\,\dd x
		\right)^{1/p}
		+
		M^{1/p}R^s
		\right].
		\]
	\end{theorem}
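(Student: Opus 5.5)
The strategy is to compare $u$ with its fractional $p$-harmonic replacement on small balls, turn the volume term into a control of the energy gap, and run a Campanato-type iteration on the oscillation.

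\emph{Step 1 (replacement and energy gap).} Fix $B_{4R}(x_0)\subset\Omega$ and, for $x\in B_{R}(x_0)$ and $0<\rho\le R$, let $v$ minimize the fractional $p$-energy among functions agreeing with $u$ outside $B_\rho(x)$. Then $v$ is weakly fractional $p$-harmonic in $B_\rho(x)$. Since $v$ is admissible for $\mathcal I_M$, minimality of $u$ gives
\[
[u]^p_{W^{s,p}}-[v]^p_{W^{s,p}}\le M|B_\rho|.
\]
For $p\ge2$ one has the elementary inequality $|b|^p\ge|a|^p+p|a|^{p-2}a(b-a)+c_p|b-a|^p$. Apply it with $a=v(x)-v(y)$ and $b=u(x)-u(y)$; the linear term vanishes by the Euler--Lagrange equation for $v$, since $u-v$ is supported in $B_\rho$. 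This yields
\[
\iint\frac{|(u-v)(x)-(u-v)(y)|^p}{|x-y|^{n+sp}}\le C M\rho^n.
\]
The fractional Poincaré inequality for $u-v\in W^{s,p}_0(B_\rho)$ then gives $\mint_{B_\rho}|u-v|^p\le CM\rho^{sp}$.

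\emph{Step 2 (decay for $v$).} I would invoke the known interior Hölder estimate for fractional $p$-harmonic functions (Di Castro--Kuusi--Palatucci / Brasco--Lindgren--Schikorra). It gives, for $r\le\rho/2$,
\[
\operatorname{osc}_{B_r}v\le C\Big(\frac r\rho\Big)^{\beta}\Big[\operatorname{Tail}(v-\bar v;x,\rho/2)+\Big(\mint_{B_\rho}|v-\bar v|^p\Big)^{1/p}\Big].
\]
Combining this with Step 1, I aim for a decay of the excess $\phi(r):=(\mint_{B_r}|u-(u)_r|^p)^{1/p}$ augmented by a tail term:
\[
\phi(\tau\rho)+\operatorname{Tail}(u-(u)_{\tau\rho};x,\tau\rho)\le C\tau^{\beta}\big[\phi(\rho)+\operatorname{Tail}(u-(u)_\rho;x,\rho)\big]+C\tau^{-n/p}M^{1/p}\rho^{s}.
\]
To get this I split the tail at scale $\tau\rho$ into the annulus $B_\rho\setminus B_{\tau\rho}$, controlled by averages of $|u-(u)_r|$ over dyadic radii, and the complement of $B_\rho$, which carries a factor $\tau^{sp/(p-1)}$. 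The shift in means is handled by the usual telescoping of $(u)_{2^{-k}\rho}$.

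\emph{Step 3 (iteration and conclusion).} Choose $\tau$ small so that $C\tau^\beta\le\tau^{\alpha_0}$, with $\alpha_0<\min(\beta,s)$. Iterating gives
\[
\phi(r)\le C(r/R)^{\alpha_0}\big[\operatorname{Tail}(u;x_0,2R)+(\mint_{B_{4R}}|u|^p)^{1/p}+M^{1/p}R^s\big]
\]
for all $r\le R$ and $x\in B_R(x_0)$. The geometric term $M^{1/p}\rho^s$ sums because $\alpha_0<s$. The initial step at $\rho=R$ centered at $x$ is bounded via $B_R(x)\subset B_{2R}(x_0)$ and a comparison of tails at different centers. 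Campanato's characterization then gives the stated seminorm bound, and scaling fixes the powers of $R$. The local boundedness from the subsolution property of Theorem \ref{thm:intro-existence} is not strictly needed, but it can simplify the tail bookkeeping.

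\emph{Main obstacle.} The delicate point is Step 2: propagating the nonlocal tail through the iteration. The $p$-harmonic estimate for $v$ involves the tail of $v$, which equals $u$ outside $B_\rho$; but at the smaller scale the new tail contains $u$ in the annulus, where only integral control is available. A tail of order $p-1$ is weaker than $L^p$ averages. I would first try Hölder plus a sum over dyadic annuli. If the constants do not close, I would instead fall back on an $L^\infty$ version, using local boundedness of $u$ via the subsolution property, and iterate the oscillation directly.
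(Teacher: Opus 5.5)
Your proposal is correct, but after Step 1 it follows a genuinely different route from the paper. Step 1 is the paper's \Cref{lem:replacement-energy-zero-phase} together with the Poincaré step of \Cref{remark estimate}. You obtain the energy gap from the pointwise inequality for $p\ge2$, while the paper integrates along $\tau u+(1-\tau)v$; the outcome is the same.

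From there the paper does not iterate, and it never needs a tail to decay. It uses two ingredients:
\begin{itemize}
\item local boundedness of $u$ (\Cref{Boundedness}, available because $u$ is a subsolution);
\item the change-of-center \Cref{Tail estimate1}.
\end{itemize}
Together these bound $\operatorname{Tail}(u;y,2\rho)+(\mint_{B_{2\rho}(y)}|u|^p)^{1/p}$ by a single quantity $A_R$, uniformly in $y\in B_R(x_0)$ and $2\rho\le R$. A single comparison at the intermediate scale $\rho=R(r/R)^\theta$, with $\theta=(n+p\alpha)/(n+p(\alpha+s))$, then balances the decay $(r/\rho)^{p\alpha}$ of the replacement against the crude error $M\rho^{n+sp}/r^n$ coming from $\int_{B_\rho}|u-v|^p$. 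Campanato's characterization finishes the proof. This is short, but it only gives $\alpha_0=sp\alpha/(n+p(\alpha+s))$.

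Your iteration for $E(\rho):=\phi(\rho)+\operatorname{Tail}(u-(u)_\rho;x,\rho)$ gives any $\alpha_0<\min\{\alpha,s\}$. It also needs boundedness only of the replacement, not of $u$. The price is genuine tail bookkeeping. Your ``$L^\infty$ fallback'' is essentially the paper's argument.

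If you pursue your route, Step 2 must close with constants independent of $\tau$. To ensure this, estimate the annular part through $v$ rather than through $u$. On $B_{\rho/2}(x)\setminus B_{\tau\rho}(x)$, split $u-(u)_{\tau\rho}=(v-v(x))+(u-v)+(v(x)-(u)_{\tau\rho})$. Then use $|v(y)-v(x)|\le C(|y-x|/\rho)^{\beta}[E(\rho)+M^{1/p}\rho^s]$, which follows from \Cref{CKP} applied to $v-(u)_\rho$. The resulting integral $\int_{\tau\rho}^{\rho/2}t^{\beta(p-1)-sp-1}\,\dd t$ yields $\tau^{\beta(p-1)}$ with a $\tau$-independent constant only because $\beta<sp/(p-1)$, which \Cref{CKP} guarantees.

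Likewise, the mean shift $|(u)_\rho-(u)_{\tau\rho}|$ that multiplies $\tau^{sp/(p-1)}$ in the outer part must be bounded by $C[E(\rho)+M^{1/p}\rho^s]+C_\tau M^{1/p}\rho^s$. You can do this via local boundedness of $v-(u)_\rho$ on $B_{\rho/4}(x)$, or by your telescoping with each intermediate $\phi(2^{-k}\rho)$ estimated by the same comparison. The trivial bound $\tau^{-n/p}\phi(\rho)$ would destroy the smallness. With these choices, every $u-v$ contribution is only $C_\tau M^{1/p}\rho^s$, and the iteration runs as you describe.
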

	
	Once minimizers are continuous, the positivity set
	$\Omega\cap\{u>0\}$ is open. Therefore, variations compactly supported
	inside the positivity set do not change the volume term, and the usual
	first variation of the fractional energy gives the homogeneous equation.
	
	\begin{corollary}\label{thm:intro-positive-phase}
		Let $u$ be a minimizer of $\mathcal I_M$ in $\Omega$. Then, $(-\Delta)^s_p u=0$ weakly in $\Omega\cap\{u>0\}$.
	\end{corollary}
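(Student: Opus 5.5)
The plan is the standard first-variation argument, made possible by the continuity from Theorem~\ref{thm:intro-holder}. By that theorem, $u$ has a representative that is locally Hölder continuous in $\Omega$; we work with it. Then $D:=\Omega\cap\{u>0\}$ is open. Fix a test function $\varphi\in \mathrm{C}^\infty_c(D)$, and let $K=\operatorname{supp}\varphi$. The goal is
\[
\int_{\mathbb{R}^n}\int_{\mathbb{R}^n}\frac{|u(x)-u(y)|^{p-2}(u(x)-u(y))(\varphi(x)-\varphi(y))}{|x-y|^{n+sp}}\,\dd x\,\dd y=0 .
\]
By density this extends to all $\varphi\in \mathrm{W}^{s,p}$ compactly supported in $D$.

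The key observation is that the volume term is locally constant along these variations. Since $K$ is compact and $u$ is continuous and positive on $K$, we have $\delta:=\min_K u>0$. For $|t|<\delta/(\|\varphi\|_\infty+1)$, the competitor $u_t:=u+t\varphi$ satisfies the following:
\begin{itemize}
\item $u_t\in\mathcal A_g(\Omega)$, because $\varphi$ vanishes outside $K\subset\Omega$.
\item $u_t>0$ on $K$, and $u_t=u$ off $K$.
\end{itemize}
Hence $\{u_t>0\}\cap\Omega=\{u>0\}\cap\Omega$, and so $\mathcal I_M(u_t)-\mathcal I_M(u)=E(u_t)-E(u)$, where $E$ denotes the Gagliardo $p$-energy. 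Minimality of $u$ then gives that $t\mapsto E(u+t\varphi)$ has a local minimum at $t=0$.

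It remains to differentiate $E(u+t\varphi)$ at $t=0$. The integrand $|a+tb|^p$, with $a=u(x)-u(y)$ and $b=\varphi(x)-\varphi(y)$, is $\mathrm{C}^1$ in $t$ since $p\ge2$. Its derivative is bounded, uniformly for $|t|\le1$, by $p(|a|+|b|)^{p-1}|b|$ times the kernel. This majorant is integrable by Hölder's inequality, because $u,\varphi\in \mathrm{W}^{s,p}(\mathbb{R}^n)$. Dominated convergence therefore justifies differentiation under the integral, and setting the derivative to zero yields the weak equation.

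The only delicate point is the first step: one must use the continuous representative, so that ``$u>0$ on $K$'' means a positive minimum rather than merely positivity a.e. Otherwise $u+t\varphi$ could create or destroy positivity on a set of positive measure. This is exactly where Theorem~\ref{thm:intro-holder} enters; everything else is routine.
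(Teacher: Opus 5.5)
Your proposal is correct and is essentially the paper's own proof: continuity from \Cref{thm:intro-holder} makes $\Omega\cap\{u>0\}$ open, two-sided perturbations $u+t\varphi$ with $\varphi\in \mathrm{C}^\infty_{\mathrm{c}}(\Omega\cap\{u>0\})$ leave the positivity set unchanged for small $|t|$, and the vanishing derivative at $t=0$ gives the weak equation. Your dominated-convergence justification for differentiating under the integral, which the paper leaves implicit, is a useful addition; it works for any $p>1$, not only $p\geq 2$.
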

	
	The final main result concerns the behavior of minimizers near free
	boundary points. In the fractional Laplacian case $p=2$, the optimal
	growth at the free boundary is of order $r^s$. We prove that the same
	growth rate holds for the fractional $p$-Laplacian.
	
	\begin{theorem}[Optimal growth at free boundary points]\label{thm:intro-growth}
		Let $u$ be a nonnegative minimizer of $\mathcal I_M$ in $\Omega$. Then,
		there exist constants $c>0$ and $\widetilde r>0$, with $c$ depending only
		on $n,s$, and $p$, and $\widetilde r$ depending only on $n,s,p$, and $M$,
		such that the following holds. If $x_0\in\partial\{u>0\}$,
		$0<r<\widetilde r$, and $B_{2r}(x_0)\subset\Omega$; then, for every
		$x\in B_{r/2}(x_0)$,
		\[
		u(x)
		\leq
		\dfrac{c}{r^{s}}
		\left[
		\operatorname{Tail}(u;x_0,r)
		+
		\left(
		\mint_{B_{2r}(x_0)}u^p\,\dd x
		\right)^{1/p}
		+
		1
		\right]
		|x-x_0|^s.
		\]
	\end{theorem}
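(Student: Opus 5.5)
The argument rests on one basic oscillation-type estimate at free boundary points, followed by a scaling step. Fix $x_0\in\partial\{u>0\}$, so $u(x_0)=0$ by the continuity given in Theorem~\ref{thm:intro-holder}.

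\textbf{Step 1: an $L^p$ decay estimate.} For $0<\rho\le r$ with $B_{2\rho}(x_0)\subset\Omega$, set
\[
\omega(\rho):=\operatorname{Tail}(u;x_0,\rho)+\Big(\mint_{B_{2\rho}(x_0)}u^p\,\dd x\Big)^{1/p}.
\]
I will prove that there is $\theta\in(0,1/4)$, depending only on $n,s,p$, such that
\[
\omega(\theta\rho)\le \tfrac12\,\theta^{s}\,\omega(\rho)+C M^{1/p}\rho^s .
\]
To do this, let $h$ be the fractional $p$-harmonic replacement of $u$ in $B_{2\rho}(x_0)$, so that $h=u$ outside this ball. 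Minimality of $u$, compared against $h$, gives the energy-gap estimate already used in the proof of Theorem~\ref{thm:intro-holder}. For $p\ge2$ this estimate controls $[u-h]^p_{W^{s,p}}$ by $M|B_{2\rho}|\sim M\rho^n$. Combining it with fractional Poincaré yields
\[
\mint_{B_{2\rho}}|u-h|^p\lesssim M\rho^{sp}.
\]

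\textbf{Step 2: the key use of $x_0$.} By the comparison principle $h\ge0$. Since $u(x_0)=0$, I need smallness of $h$ near $x_0$. My first attempt is as follows. Because $h$ is $p$-harmonic and nonnegative in $B_{2\rho}$, Hölder regularity combined with the weak Harnack inequality for the fractional $p$-Laplacian (Di Castro–Kuusi–Palatucci) gives
\[
\inf_{B_{\rho}}h\ \gtrsim\ \Big(\mint_{B_\rho}h^{p-1}\Big)^{1/(p-1)}-C\,\operatorname{Tail}(h_-;x_0,2\rho),
\]
and the tail term vanishes since $h\ge0$. The issue is that $\inf h$ need not be small, because $u(x_0)=0$ while $h(x_0)$ may be positive.

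The alternative is to exploit the free boundary through the volume term. Hölder continuity of $u$ and $u(x_0)=0$ alone do not give the $r^s$ rate, so the standard route is a compactness/blow-up argument by contradiction. Suppose there are minimizers $u_k$ and free boundary points $x_k$ with
\[
u_k(x)\ge k\,\frac{|x-x_k|^s}{r^s}\,[\cdots].
\]
Choose a scale $\rho_k$ at which $\omega_k(\rho_k)/\rho_k^s$ is maximal, up to a factor, among dyadic scales $\ge\rho_k$; this is the standard "first bad scale" trick. Then rescale
\[
v_k(y)=u_k(x_k+\rho_k y)\big/\omega_k(\rho_k).
\]
Each $v_k$ minimizes the functional with the constant $M_k=M\rho_k^{sp}/\omega_k(\rho_k)^p\to0$. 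It has normalized $L^p$ norm in $B_2$ and normalized tail, and by the choice of scale its growth is controlled at larger scales: $\omega_{v_k}(2^j)\lesssim 2^{js}$. This last bound controls tails uniformly, using $sp$-weighted sums of $2^{-jsp}2^{js(p-1)}$, which converge since the tail exponent is $-sp$.

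\textbf{Step 3: passing to the limit.} Theorem~\ref{thm:intro-holder} gives uniform $C^{\alpha_0}$ bounds, so $v_k\to v$ locally uniformly and in $L^{p-1}$ with weights. The limit $v$ is a nonnegative global fractional $p$-harmonic function in $B_2$, because the volume penalty vanishes and minimality passes to the limit by the energy-gap estimate. It satisfies $v(0)=0$ and has normalized size. The strong minimum principle for nonnegative fractional $p$-superharmonic functions, which follows from the weak Harnack inequality with zero negative tail, then forces $v\equiv0$ in $\mathbb{R}^n$. This contradicts the normalization $\omega_v(1)=1$.

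\textbf{Step 4: from the decay estimate to the pointwise bound.} The resulting bound $\omega(\rho)\lesssim(\rho/r)^s[\omega(r)+1]$ is valid for $\rho\le r<\widetilde r$, where $\widetilde r$ is chosen so that $M^{1/p}\widetilde r^s\le1$ absorbs $M$. To get the pointwise estimate, apply local boundedness of the subsolution $u$ (Theorem~\ref{thm:intro-existence}) in the ball $B_{|x-x_0|}(x)$. This gives
\[
u(x)\lesssim\omega(2|x-x_0|)\lesssim c\,r^{-s}[\cdots]\,|x-x_0|^s .
\]

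The main obstacle is Step 3: ensuring that the normalized tails converge and remain controlled, and justifying the strong minimum principle for the limit. Tail convergence requires the dyadic choice of scale together with the convergence of the $2^{-js}$ series, which holds only because the growth exponent $s$ is strictly less than $sp/(p-1)$.
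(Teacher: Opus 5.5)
Your argument is correct in outline, but it takes a different route from the paper. You argue by contradiction and compactness. You renormalize at a first bad scale, so that $M_{v_k}\to0$ and $\omega_{v_k}(2^j)\lesssim 2^{js}$, extract a locally uniform limit using \Cref{thm:intro-holder}, and show via the energy gap that the limit is fractional $p$-harmonic. You then conclude with a rigidity fact: a nonnegative $p$-harmonic function vanishing at an interior point is identically zero.

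The paper never passes to a limit. It proves a quantitative flatness lemma (\Cref{Flatness lemma}) by exactly the direct route you abandoned in Step~2. With $v$ the replacement in $B_{1/2}$, comparison gives $u\le v$, and Harnack, local boundedness and the tail estimate give $\sup_{B_{1/10}}u\le C(\mint_{B_\iota}v^p)^{1/p}$ for any small $\iota$. This average is small for two reasons: $\mint_{B_\iota}|u-v|^p\lesssim\iota^{-n}M$, and $\mint_{B_\iota}u^p\lesssim\iota^{p\alpha_0}[\cdots]^p$ by Hölder continuity and $u(0)=0$. One fixes $\iota$ first and then $M_0$. So $\inf h$ \emph{can} be made small, at a small scale and for small $M$.

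The lemma is then iterated at scales $10^{-k}$ via $10^{ks}\tilde u(10^{-k}\cdot)$, which keeps $\widetilde M\le M_0$ fixed. The tail bound is propagated alongside the supremum bound through the factor $10^{-s/(p-1)}<1$ and the constant in \eqref{constant-iteration}. This is the same mechanism, $s<sp/(p-1)$, that gives your tail tightness.

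The paper's route is constructive and uses only the estimates of Section~2. Yours is conceptually shorter and isolates the rigidity behind the $r^s$ rate. However, it requires a stability result for weak solutions of the fractional $p$-Laplacian under local uniform convergence plus tail convergence. The local $\mathrm{W}^{s,p}$ bounds for that would come from Caccioppoli for the subsolutions $v_k$ and \Cref{lem:replacement-energy-zero-phase}. It also yields no explicit constants.

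Three details to fix:
\begin{itemize}
\item The one-step decay announced in Step~1 is never proved, and it is not what the blow-up produces. The contradiction argument gives the multiscale bound $\omega(\rho)\lesssim(\rho/r)^s[\omega(r)+M^{1/p}r^s]$ directly, so Step~1 can be dropped.
\item Weak Harnack only gives $v\equiv0$ near the origin. To get $v\equiv0$ in $\mathbb{R}^n$, which you need in order to contradict the tail part of $\omega_v(1)=1$, invoke \Cref{Tail estimate01} with zero negative tail, or test the equation with $\varphi\ge0$ supported where $v=0$.
\item You should verify that $\rho_k\to0$, which follows from the contradiction hypothesis. Then the part of $\operatorname{Tail}(v_k;0,1)$ coming from beyond the original scale is $O(\rho_k^{s/(p-1)})$.
\end{itemize}
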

	
	The proof of \Cref{thm:intro-growth} follows an iteration scheme inspired
	by the strategy of \cite{ST25}, but it requires estimates adapted to the
	nonlinear fractional $p$-Laplacian. The main step is a flatness lemma for
	small values of the penalization parameter. The iteration propagates both
	a supremum decay and a compatible tail bound across dyadic scales, which
	produces the sharp exponent $s$.

    The paper is organized as follows. In \Cref{sec:preliminaries}, we fix the notation, recall the fractional Sobolev framework, and introduce the weak formulation of the fractional $p$-Laplacian. We also collect the local
boundedness, Hölder, Harnack, and tail estimates used throughout the paper,
and prove the auxiliary estimates for fractional $p$-harmonic replacements.
In \Cref{sec:existence-first-properties}, we prove the existence of
minimizers and establish their first variational properties, including
nonnegativity, the subsolution property, and local boundedness. In
\Cref{Sec Holder regularity}, we prove the local Hölder estimate stated in
\Cref{thm:intro-holder} by comparing minimizers with their fractional
$p$-harmonic replacements and applying a Campanato argument. We also show
that minimizers solve the homogeneous fractional $p$-Laplace equation in
their positivity set, proving \Cref{thm:intro-positive-phase}. Finally, in
\Cref{sec:optimal-growth}, we prove the flatness lemma and use it in an
iteration scheme to establish the optimal free boundary growth estimate
\Cref{thm:intro-growth}.

\section{Preliminaries}\label{sec:preliminaries}

		In this section, we fix notation, introduce the fractional Sobolev spaces and the
		weak formulation of the fractional $p$-Laplacian, and collect the nonlocal estimates
		we use throughout the paper. We also record the elementary variational consequence
		that minimizers of $\mathcal I_M$ are weak subsolutions of the homogeneous
		fractional $p$-Laplace equation.
		
		Throughout the paper, $\Omega\subset\mathbb{R}^n$ is a bounded domain,
		$s\in(0,1)$ and $p\ge 2$. We write
		\[
		u_+ := \max\{u,0\},
		\quad
		u_- := \max\{-u,0\},
		\]
		and, whenever $E\subset\mathbb{R}^n$ has positive and finite measure,
		\[
		(u)_E := \mint_E u\,\dd x
		=
		\frac{1}{|E|}\int_E u\,\dd x.
		\]
		In particular, if $E=B_r(x_0)$, we also write $(u)_{x_0,r}$ or simply
		$(u)_r$ when the center is clear from the context.
		
		We denote by $\mathrm{W}^{s,p}(\mathbb{R}^n)$ the fractional Sobolev space
		\[
		\mathrm{W}^{s,p}(\mathbb{R}^n)
		:=
		\left\{
		u\in \mathrm{L}^p(\mathbb{R}^n)
		\, ; ~
		[u]_{\mathrm{W}^{s,p}(\mathbb{R}^n)}<\infty
		\right\},
		\]
		where
		\[
		[u]_{\mathrm{W}^{s,p}(\mathbb{R}^n)}^p
		:=
		\int_{\mathbb{R}^n}\int_{\mathbb{R}^n}
		\frac{|u(x)-u(y)|^p}{|x-y|^{n+sp}}
		\,\dd x\,\dd y.
		\]
		The norm is given by
		\[
		\|u\|_{\mathrm{W}^{s,p}(\mathbb{R}^n)}
		:=
		\left(
		\|u\|_{\mathrm{L}^p(\mathbb{R}^n)}^p
		+
		[u]_{\mathrm{W}^{s,p}(\mathbb{R}^n)}^p
		\right)^{1/p}.
		\]
		We also use the notation
		\[
		\mathrm{W}^{s,p}_0(\Omega)
		:=
		\left\{
		u\in \mathrm{W}^{s,p}(\mathbb{R}^n)
		:
		u=0 \text{ a.e. in } \mathbb{R}^n\setminus\Omega
		\right\}.
		\]
		Thus, given an exterior datum $g\in \mathrm{W}^{s,p}(\mathbb{R}^n)$, the natural
		admissible class is
		\[
		\mathcal A_g(\Omega)
		:=
		\left\{
		u\in \mathrm{W}^{s,p}(\mathbb{R}^n)
		:
		u-g\in \mathrm{W}^{s,p}_0(\Omega)
		\right\}.
		\]
		Equivalently, as in \eqref{minimizer}, $u=g$ a.e. in $\mathbb{R}^n\setminus\Omega$.
		
		For $v\in \mathrm{W}^{s,p}(\mathbb{R}^n)$, $x_0\in\mathbb{R}^n$ and $r>0$, we
		define the nonlocal tail by
		\[
		\operatorname{Tail}(v;x_0,r)
		:=
		\left(
		r^{sp}
		\int_{\mathbb{R}^n\setminus B_r(x_0)}
		\frac{|v(y)|^{p-1}}{|y-x_0|^{n+sp}}
		\,\dd y
		\right)^{1/(p-1)}.
		\]
        We also introduce the associated tail space: for $p>1$ and $s\in(0,1)$, we set
\[
\mathrm{L}^{p-1}_{sp}(\mathbb{R}^n)
:=
\left\{
u\in \mathrm{L}^{p-1}_{\operatorname{loc}}(\mathbb{R}^n)
:
\operatorname{Tail}(u;z,r)<\infty
\ \text{for every } z\in\mathbb{R}^n
\text{ and every } r>0
\right\}.
\]
This space can be equivalently characterized as
\[
\mathrm{L}^{p-1}_{sp}(\mathbb{R}^n)
=
\left\{
u\in \mathrm{L}^{p-1}_{\operatorname{loc}}(\mathbb{R}^n)
:
\int_{\mathbb{R}^n}
\frac{|u(x)|^{p-1}}{(1+|x|)^{n+sp}}
\,\dd x
<\infty
\right\}.
\]
		
		We now introduce the weak formulation of the fractional $p$-Laplacian. 
		For $u,\varphi\in \mathrm{W}^{s,p}(\mathbb{R}^n)$, set
		\[
		\mathcal E(u,\varphi)
		:= 
		\int_{\mathbb{R}^n} \int_{\mathbb{R}^n}
		\frac{
			j_p(u(x)-u(y))(\varphi(x)-\varphi(y))
		}
		{|x-y|^{n+sp}}
		\,\dd x\,\dd y,
		\] where $j_p(t):=|t|^{p-2}t$.
		Formally, the corresponding operator is
		\begin{equation}\label{fractional-p-laplacian}
			\mathcal L u(x)
			:=
			\operatorname{P.V.}
			\int_{\mathbb{R}^n}
			\frac{
				|u(x)-u(y)|^{p-2}(u(x)-u(y))
			}
			{|x-y|^{n+sp}}
			\,\dd y.
		\end{equation}
		The weak formulation is independent of the multiplicative constants that
		may appear when passing from the double-integral form to the principal value form.
		
		Let $D\subset\Omega$ be open. We say that $u\in\mathrm{W}^{s,p}(\mathbb{R}^n)$ is
		a weak subsolution of
		\[
		\mathcal L u=0
		\quad\text{in } D
		\]
		if $\mathcal E(u,\varphi)\leq0$, for every nonnegative $\varphi\in \mathrm{W}^{s,p}_0(D)$. Similarly, $u$ is a weak
		supersolution if $\mathcal E(u,\varphi)\geq0$, for every nonnegative $\varphi\in \mathrm{W}^{s,p}_0(D)$. A weak solution is both a
		weak subsolution and a weak supersolution, that is,
		\[
		\mathcal E(u,\varphi)=0,
		\]
		for every $\varphi\in \mathrm{W}^{s,p}_0(D)$.
		Moreover, given $g\in \mathrm{W}^{s,p}(\mathbb{R}^n)$, we say that $u$ solves
		\begin{equation}\label{maineq}
			\left\{
			\begin{array}{rcl}
				\mathcal L u&=&0
				\quad\text{in } \Omega,\\
				u&=&g
				\quad\text{in } \mathbb{R}^n\setminus\Omega,
			\end{array}
			\right.
		\end{equation}
		in the weak sense if $u\in\mathcal A_g(\Omega)$ and $\mathcal E(u,\varphi)=0$, for every $\varphi\in \mathrm{W}^{s,p}_0(\Omega)$.
		
		We now define minimizers of the free boundary functional $\mathcal I_M$ introduced
		in \eqref{main functional}.
			Let $M>0$ and let $g\in \mathrm{W}^{s,p}(\mathbb{R}^n)$. We say that
			$u\in\mathcal A_g(\Omega)$ is a minimizer of $\mathcal I_M$ in $\Omega$ with
			exterior datum $g$ if, for every $v\in\mathcal A_g(\Omega)$,
			\[
			\mathcal I_M(u)\leq \mathcal I_M(v).
			\]

		\subsection{Estimates for fractional \texorpdfstring{$p$}{p}-harmonic functions}
		
		We now collect the nonlocal estimates used in the regularity arguments. The first is the local boundedness estimate for weak subsolutions; see
		\cite[Theorem 1.1]{CKP16}.
		
		\begin{theorem}[Local boundedness]\label{Boundedness}
			Let $p\in(1,+\infty)$ and let $u\in\mathrm{W}^{s,p}(\mathbb{R}^n)$ be a weak
			subsolution of $\mathcal L u=0$ in $\Omega$. If $B_r(x_0)\subset\Omega$; then,
			\[
			\sup_{B_{r/2}(x_0)}u
			\leq
			\delta\operatorname{Tail}(u_+;x_0,r/2)
			+
			c\,\delta^{-\frac{(p-1)n}{sp^2}}
			\left(
			\mint_{B_r(x_0)}u_+^p\,\dd x
			\right)^{1/p}
			\]
			for every $\delta\in(0,1]$, where $c>0$ depends only on $n,p$, and $s$.
		\end{theorem}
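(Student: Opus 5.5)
This is the local boundedness estimate of Di Castro--Kuusi--Palatucci, which the paper quotes from \cite[Theorem 1.1]{CKP16}. If one were to prove it rather than cite it, I would run a De Giorgi iteration on a family of shrinking balls, keeping the nonlocal contribution explicit through the tail. The first step is a Caccioppoli inequality with tail for subsolutions. Fix $k\ge0$ and set $w=(u-k)_+$. Test the subsolution inequality $\mathcal E(u,\varphi)\le0$ with $\varphi=w\psi^p$, where $\psi\in C_c^\infty(B_\rho)$ is a cutoff with $\psi\equiv1$ on $B_{\rho'}$ and $|\nabla\psi|\lesssim(\rho-\rho')^{-1}$. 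The local part of the double integral is handled with the algebraic inequalities for $j_p$ applied to truncations. The interactions between $B_\rho$ and its complement are bounded using $j_p(u(x)-u(y))\le (u(y)-k)_+^{p-1}$ on $\{u(x)>k\}$. Together these give
\[
[w\psi]^p_{\mathrm{W}^{s,p}(B_\rho)}
\lesssim \int_{B_\rho}\!\int_{B_\rho}\frac{\max\{w(x),w(y)\}^p|\psi(x)-\psi(y)|^p}{|x-y|^{n+sp}}
+\int_{B_\rho}w\psi^p\,\dd x\,\sup_{x\in\operatorname{supp}\psi}\int_{\mathbb{R}^n\setminus B_\rho}\frac{w(y)^{p-1}}{|x-y|^{n+sp}}\dd y .
\]
The last integral is controlled by $(\rho/(\rho-\rho'))^{n+sp}\rho^{-sp}\operatorname{Tail}(w;x_0,\rho)^{p-1}$.

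Next comes the iteration itself. Take radii $r_j=\tfrac r2(1+2^{-j})$, levels $k_j=k(1-2^{-j})$ with $k$ to be chosen, and set $A_j=\mint_{B_{r_j}}(u-k_j)_+^p$. Combine the Caccioppoli inequality with the fractional Sobolev inequality on balls, with exponent $p^*=np/(n-sp)$ (or any exponent larger than $p$ when $sp\ge n$), and with the estimate $|\{u>k_{j+1}\}\cap B_{r_{j+1}}|\le A_j(k_{j+1}-k_j)^{-p}|B|$ and Hölder's inequality. This yields a recursive inequality
\[
\frac{A_{j+1}}{k^p}\le c\,b^j\Big(1+\frac{\operatorname{Tail}(u_+;x_0,r/2)^{p-1}}{k^{p-1}}\Big)\Big(\frac{A_j}{k^p}\Big)^{1+\beta},
\qquad \beta=\tfrac{sp}{n}\ (\text{or any } \beta>0),
\]
with $b>1$. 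Here the tail of $(u-k_j)_+$ over $\mathbb{R}^n\setminus B_{r_j}$ is bounded by $\operatorname{Tail}(u_+;x_0,r/2)$, since $(u-k_j)_+\le u_+$ and $r_j\ge r/2$.

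The key choice, and the main obstacle, is the selection of $k$ that balances the tail against the averaged $\mathrm{L}^p$ norm with the stated dependence on $\delta$. I would take
\[
k=\delta\operatorname{Tail}(u_+;x_0,r/2)+c\,\delta^{-\frac{(p-1)n}{sp^2}}\Big(\mint_{B_r}u_+^p\Big)^{1/p}.
\]
Then $\operatorname{Tail}^{p-1}/k^{p-1}\le\delta^{1-p}$, so the factor in the recursion is at most $2c\,\delta^{1-p}b^j$. The standard fast-geometric-convergence lemma gives $A_j\to0$ provided $A_0/k^p\le (c\delta^{1-p})^{-1/\beta}b^{-1/\beta^2}$. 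With $\beta=sp/n$ this condition reads $A_0\le c'\,\delta^{(p-1)n/(sp)}k^p$, which is guaranteed by the second term of $k$ because $\big(\delta^{-(p-1)n/(sp^2)}\big)^p=\delta^{-(p-1)n/(sp)}$. Finally, $A_j\to0$ gives $u\le k$ a.e. in $B_{r/2}$, which is the claimed estimate.

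The delicate points are keeping the constants in the Caccioppoli tail term uniform along the iteration, which uses $\rho/(\rho-\rho')\sim2^j$ and is absorbed into $b^j$, and treating the case $sp\ge n$ separately via a fixed $\beta>0$. Both follow \cite{CKP16}.
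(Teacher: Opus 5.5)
The paper gives no proof and simply quotes \cite[Theorem 1.1]{CKP16}. Your De Giorgi scheme is that argument, and for $sp<n$ it is correct. The factor $1+\operatorname{Tail}(u_+;x_0,r/2)^{p-1}k^{1-p}\le 2\delta^{1-p}$, the recursion with $\beta=sp/n$, and the choice of $k$ producing exactly $\delta^{-(p-1)n/(sp^2)}$ all check out.

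There are two small slips. First, for $x\in B_\rho$ with $u(x)>k$ and $y\notin B_\rho$, the inequality you need is $j_p(u(y)-u(x))\le (u(y)-k)_+^{p-1}$. It is $-j_p(u(x)-u(y))$ that must be bounded; $j_p(u(x)-u(y))$ itself is unbounded when $u(y)\ll 0$. Second, $\operatorname{supp}\psi$ must lie in an intermediate ball such as $B_{(\rho+\rho')/2}$ to get the factor $(\rho/(\rho-\rho'))^{n+sp}$.

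The substantive gap is the range $sp\ge n$, which the statement covers. With a general $\beta$, your convergence condition reads $A_0\le c'\delta^{(p-1)/\beta}k^p$, so $k$ must contain $\delta^{-(p-1)/(p\beta)}A_0^{1/p}$. Any $\beta=1-p/q$ obtained from an embedding into $\mathrm{L}^q$ satisfies $\beta\le 1$, with equality only via Morrey when $sp>n$. Hence $\beta<sp/n$ as soon as $sp\ge n$, and $\delta^{-(p-1)/(p\beta)}>\delta^{-(p-1)n/(sp^2)}$ for $\delta<1$. So in that range your argument proves a strictly weaker inequality than the one stated.

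The fix is to replace Sobolev plus H\"older by an elementary Faber--Krahn-type bound, valid for all $s$ and $p$. If $f=0$ a.e. outside a set $E$ of finite measure, then
\[
[f]^p_{\mathrm{W}^{s,p}(\mathbb{R}^n)}
\geq
2\int_E|f(x)|^p\int_{\mathbb{R}^n\setminus E}\frac{\dd y}{|x-y|^{n+sp}}\,\dd x
\geq
c(n,s,p)\,|E|^{-sp/n}\int_E|f|^p\,\dd x .
\]
The second inequality holds because, for fixed $x$, the inner integral is smallest when $E$ is the ball centred at $x$ with the same measure.

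Apply this to $f=(u-k_{j+1})_+\psi_j$ with $E=\{u>k_{j+1}\}\cap\operatorname{supp}\psi_j$. Estimate $|E|$ by Chebyshev as you already do. Add to your Caccioppoli bound the interaction of $\operatorname{supp}\psi_j$ with $\mathbb{R}^n\setminus B_{r_j}$, which costs only $c\,2^{jsp}r^{-sp}\int_{B_{r_j}}(u-k_j)_+^p\,\dd x$. This gives your recursion with $\beta=sp/n$ in every regime; the fast-convergence lemma needs only $\beta>0$, not $\beta<1$. Hence you obtain the stated power of $\delta$ for all $s$ and $p$.

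For the paper itself the issue is harmless, since the estimate is only ever applied with a fixed $\delta$ (in fact $\delta=1$).
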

		
		The next result is the local Hölder estimate for weak solutions; see
		\cite[Theorem 1.2]{CKP16}.
		
		\begin{theorem}[Local Hölder estimate]\label{CKP}
			Let $p\in(1,+\infty)$ and let $u\in\mathrm{W}^{s,p}(\mathbb{R}^n)$ be a weak
			solution of $\mathcal L u=0$ in $\Omega$. Then, $u$ is locally Hölder continuous in
			$\Omega$. More precisely, there exist $\alpha\in(0,1)$, with
			$\alpha<sp/(p-1)$, and $c>0$, both depending only on $n,p$, and $s$, such that,
			if $B_{2R}(x_0)\subset\Omega$, then
			\[
			\operatorname{osc}_{B_r(x_0)}u
			\leq
			c
			\left(\frac{r}{R}\right)^\alpha
			\left[
			\operatorname{Tail}(u;x_0,R)
			+
			\left(
			\mint_{B_{2R}(x_0)}|u|^p\,\dd x
			\right)^{1/p}
			\right],
			\]
			for every $r\in(0,R]$.
		\end{theorem}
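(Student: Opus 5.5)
This statement is quoted from \cite[Theorem 1.2]{CKP16} (Di Castro--Kuusi--Palatucci), so the plan is to reconstruct their De Giorgi-type argument rather than derive it from earlier results in this paper. The argument has three layers: a Caccioppoli inequality with tail, a logarithmic lemma, and an oscillation-reduction iteration in which the tail is tracked at every scale.

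\emph{Step 1 (Caccioppoli and logarithmic estimates).} Test the equation with $\varphi=(u-k)_\pm\psi^p$, where $\psi$ is a cutoff. This gives
\[
\int_{B_r}\int_{B_r}|w_\pm(x)\psi(x)-w_\pm(y)\psi(y)|^p\,d\mu
\lesssim
\int_{B_r}\int_{B_r}\max(w_\pm(x),w_\pm(y))^p|\psi(x)-\psi(y)|^p\,d\mu
+\sup_{x\in\operatorname{supp}\psi}\int_{\mathbb{R}^n\setminus B_r}\frac{w_\pm(y)^{p-1}}{|x-y|^{n+sp}}\,dy\int_{B_r}w_\pm\psi^p,
\]
where $w_\pm=(u-k)_\pm$. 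Next, for a supersolution $u\ge0$ in $B_R$, test with $(u+d)^{1-p}\psi^p$. This yields a bound on $\iint|\log\frac{u(x)+d}{u(y)+d}|^p\,d\mu$ of order $r^{n-sp}\bigl(1+d^{1-p}(r/R)^{sp}\operatorname{Tail}(u_-;x_0,R)^{p-1}\bigr)$. Combined with the fractional Poincaré inequality, this controls the measure of the set where $u$ is small relative to $d$.

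\emph{Step 2 (oscillation decay).} Fix $\sigma\le1/4$ and set $r_j=\sigma^jR/2$. Define
\[
\omega_0=2\Bigl[\operatorname{Tail}(u;x_0,R/2)+c\bigl(\mint_{B_R}|u|^p\bigr)^{1/p}\Bigr],
\qquad \omega_j=(r_j/r_0)^\alpha\omega_0 .
\]
By induction I prove $\operatorname{osc}_{B_{r_j}}u\le\omega_j$. The base case is Theorem~\ref{Boundedness} applied to $\pm u$. For the inductive step, one of $u-\inf_{B_{r_j}}u$ and $\sup_{B_{r_j}}u-u$ is at least $\omega_j/2$ on at least half of $B_{2r_{j+1}}$; call this nonnegative function $v$. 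The logarithmic lemma, with $d$ a suitable multiple of $\varepsilon\omega_j$, shows that $\{v<2\varepsilon\omega_j\}$ occupies a small fraction of the ball. A De Giorgi iteration driven by the Caccioppoli inequality then upgrades this to $v\ge\varepsilon\omega_j$ on $B_{r_{j+1}}$. This gives $\operatorname{osc}_{B_{r_{j+1}}}u\le(1-\varepsilon)\omega_j\le\omega_{j+1}$, provided $\sigma^\alpha\ge1-\varepsilon$.

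\emph{Main obstacle: the tail at scale $r_j$.} The step above needs $\operatorname{Tail}(v_-;x_0,r_j)$ to be small compared with $\omega_j$, roughly of size $\varepsilon\omega_j$. To get this, split the exterior of $B_{r_j}$ into the annuli $B_{r_{i-1}}\setminus B_{r_i}$ for $i\le j$, plus the region outside $B_{r_0}$. On each annulus the induction hypothesis gives $|v|\le 2\omega_{i-1}$, and outside $B_{r_0}$ the tail is bounded by $\omega_0$. This leads to
\[
\operatorname{Tail}(v_-;x_0,r_j)^{p-1}\lesssim\sum_{i\le j}\sigma^{(j-i)sp}\omega_{i-1}^{p-1}\lesssim\omega_j^{p-1}\sigma^{-\alpha(p-1)}\sum_k\sigma^{k(sp-\alpha(p-1))}.
\]
The series converges because $\alpha<sp/(p-1)$, and the total can be made $\le(\varepsilon\omega_j)^{p-1}$ by choosing $\sigma$ small first and then $\alpha$ small. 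The last, routine step is to interpolate between consecutive dyadic radii, which gives the estimate for every $r\in(0,R]$.
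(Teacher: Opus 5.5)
The paper gives no proof of this statement; it quotes it from Di Castro--Kuusi--Palatucci. Your outline (Caccioppoli inequality with tail, logarithmic lemma, oscillation reduction with $\omega_j=(r_j/r_0)^\alpha\omega_0$) is their argument. However, the step you call the main obstacle is set up wrongly and fails as stated. Your computation cannot deliver the bound $\operatorname{Tail}(v_-;x_0,r_j)\le\varepsilon\omega_j$ with $\varepsilon$ small. Look at the term $i=j$ of your sum, i.e.\ the annulus $B_{r_{j-1}}\setminus B_{r_j}$. There the induction hypothesis gives nothing better than $v_-\le\omega_{j-1}$, so this term contributes $c\,\omega_{j-1}^{p-1}=c\,\sigma^{-\alpha(p-1)}\omega_j^{p-1}$, with $c=c(n,s,p)$ not small. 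Correspondingly, your series $\sum_{k\ge0}\sigma^{k(sp-\alpha(p-1))}$ is at least $1$. No choice of $\sigma$ and $\alpha$ makes the total $\le(\varepsilon\omega_j)^{p-1}$. At scale $r_j$ the tail is only comparable to $\omega_j$: after restricting a priori to $\alpha\le sp/(2(p-1))$, one gets $\operatorname{Tail}(v;x_0,r_j)\le C\sigma^{-\alpha}\omega_j$ with $C=C(n,s,p)$, and nothing better.

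What makes the argument work is the scale gap, which is already in your Step 1 but which you do not use. Both the logarithmic lemma and the De Giorgi iteration are run in $B_{2r_{j+1}}$, while $v\ge0$ in the larger ball $B_{r_j}$. Hence the negative part enters only through $(2r_{j+1}/r_j)^{sp}\operatorname{Tail}(v_-;x_0,r_j)^{p-1}\lesssim\sigma^{sp-\alpha(p-1)}\omega_j^{p-1}$. In the logarithmic lemma this is the factor $(r/R)^{sp}$ with $r=2r_{j+1}$ and $R=r_j$. In the Caccioppoli inequality for $(k-v)_+$ it holds because $v_-=0$ in $B_{r_j}$ and the radii are comparable to $r_{j+1}$. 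This quantity is $\lesssim(\varepsilon\omega_j)^{p-1}$ exactly when $\varepsilon$ is tied to $\sigma$, e.g.\ $\varepsilon:=\sigma^{\frac{sp}{p-1}-\alpha}$ with $d=\varepsilon\omega_j$.

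The parameters are then fixed in this order. First take $\sigma\le1/4$ small, so that $\varepsilon\le\sigma^{sp/(2(p-1))}$ makes the logarithmic density bound $\lesssim1/\log(1/\varepsilon)$ for $\{v<2\varepsilon\omega_j\}$ fall below the De Giorgi threshold. Then take $\alpha$ small so that $1-\varepsilon\le\sigma^\alpha$; this is possible since $\varepsilon\ge\sigma^{sp/(p-1)}$.

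A smaller point: your dichotomy between $u-\inf_{B_{r_j}}u$ and $\sup_{B_{r_j}}u-u$ fails when $\operatorname{osc}_{B_{r_j}}u<\omega_j$, because then there are points where neither function reaches $\omega_j/2$. Use instead $\omega_j-(u-\inf_{B_{r_j}}u)$, which is nonnegative in $B_{r_j}$ by the induction hypothesis. Then the two sets where the respective functions are $\ge\omega_j/2$ cover $B_{2r_{j+1}}$.
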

		
		As a direct consequence, we obtain the following Campanato-type estimate.
		
		\begin{corollary}\label{campanato_estimate}
			Let $u\in\mathrm{W}^{s,p}(\mathbb{R}^n)$ be a weak solution of
			$\mathcal L u=0$ in $\Omega$ and let $\alpha\in(0,1)$ be the exponent from \Cref{CKP}. If $B_{2R}(x_0)\subset\Omega$, then, for every
			$r\in(0,R]$,
			\[
			\int_{B_r(x_0)}
			|u-(u)_{x_0,r}|^p
			\,\dd x
			\leq
			c r^n
			\left(\frac{r}{R}\right)^{p\alpha}
			\left[
			\operatorname{Tail}(u;x_0,R)
			+
			\left(
			\mint_{B_{2R}(x_0)}|u|^p\,\dd x
			\right)^{1/p}
			\right]^p,
			\]
			where $c>0$ depends only on $n,p$, and $s$.
		\end{corollary}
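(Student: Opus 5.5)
The estimate follows from the oscillation bound of \Cref{CKP} after controlling the $\mathrm{L}^p$ deviation from the mean by the oscillation. Fix $r\in(0,R]$ and write $A:=\operatorname{Tail}(u;x_0,R)+\bigl(\mint_{B_{2R}(x_0)}|u|^p\,\dd x\bigr)^{1/p}$. By \Cref{CKP}, $u$ is locally Hölder continuous, so it has a continuous representative in $B_{2R}(x_0)$. In particular, $\operatorname{osc}_{B_r(x_0)}u$ is the oscillation of this representative, and it bounds $|u(x)-u(y)|$ for a.e. $x,y\in B_r(x_0)$.

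The key pointwise step is that the average $(u)_{x_0,r}$ is a convex combination of values of $u$ on $B_r(x_0)$. Hence it lies between $\inf_{B_r(x_0)}u$ and $\sup_{B_r(x_0)}u$. It follows that
\[
|u(x)-(u)_{x_0,r}|\leq \operatorname{osc}_{B_r(x_0)}u
\]
for a.e. $x\in B_r(x_0)$. One can also see this directly by Jensen's inequality: $|u(x)-(u)_{x_0,r}|\leq \mint_{B_r(x_0)}|u(x)-u(y)|\,\dd y$.

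Raising this to the power $p$ and integrating over $B_r(x_0)$ gives
\[
\int_{B_r(x_0)}|u-(u)_{x_0,r}|^p\,\dd x\leq |B_r|\,\bigl(\operatorname{osc}_{B_r(x_0)}u\bigr)^p\leq |B_1|\,r^n\,c^p\Bigl(\frac{r}{R}\Bigr)^{p\alpha}A^p,
\]
where the last inequality is the oscillation bound of \Cref{CKP} with the same $R$ and $r$. Absorbing $|B_1|c^p$ into a new constant that depends only on $n$, $p$, and $s$ gives the claim.

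There is no real obstacle here. The only point needing care is to use the continuous representative, so that the essential oscillation equals the oscillation controlled in \Cref{CKP}, and to note that the hypothesis $B_{2R}(x_0)\subset\Omega$ is exactly the one required there.
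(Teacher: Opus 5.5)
Your argument is correct and is the same as the paper's. The paper gives no separate proof and simply calls the estimate a direct consequence of \Cref{CKP}, obtained exactly as you do: the pointwise bound $|u-(u)_{x_0,r}|\le \operatorname{osc}_{B_r(x_0)}u$, integrated over $B_r(x_0)$, with the remark about the continuous representative (equivalently, the essential oscillation) being the only point needing care.
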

		
		
		For the optimal growth analysis near free boundary points, we also use a
		nonlocal Harnack inequality \cite[Theorem 1.1]{CKP14} and a corresponding tail estimate	\cite[Lemma 4.2]{CKP14}.
		
		\begin{theorem}[Nonlocal Harnack inequality]\label{Nonlocal Harnack inequality}
			For any $s\in(0,1)$ and $p\in(1,+\infty)$, let
			$u\in\mathrm{W}^{s,p}(\mathbb{R}^n)$ be a weak solution of $\mathcal L u=0$ in
			$\Omega$. Assume that $u\geq0$ in $B_R(x_0)\subset\Omega$. Then, for every $0<r\leq R/2$,
			\[
			\sup_{B_r(x_0)}u
			\leq
			c\inf_{B_r(x_0)}u
			+
			c
			\left(\frac{r}{R}\right)^{\frac{sp}{p-1}}
			\operatorname{Tail}(u_-;x_0,R),
			\]
			where $c>0$ depends only on $n,p$, and $s$.
		\end{theorem}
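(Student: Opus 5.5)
The plan is to follow the De Giorgi--Moser-type scheme of \cite{CKP14} and combine three ingredients: the local boundedness estimate of \Cref{Boundedness}, a weak Harnack inequality for nonnegative supersolutions, and a tail estimate for $u_+$ in terms of the supremum. Throughout, write $B_\rho=B_\rho(x_0)$ and $T:=\operatorname{Tail}(u_-;x_0,R)$.

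\textbf{Step 1 (weak Harnack).} First I will show that there exists $t=t(n,p,s)>0$ such that, for every $0<\rho\le R/2$,
\[
\left(\mint_{B_\rho}u^{t}\,\dd x\right)^{1/t}\le c\inf_{B_\rho}u+c\left(\frac{\rho}{R}\right)^{\frac{sp}{p-1}}T .
\]
\begin{itemize}
\item Test the equation with $\varphi=(u+d)^{1-p}\psi^p$, where $\psi$ is a cutoff function and $d=(\rho/R)^{sp/(p-1)}T$. This gives a logarithmic Caccioppoli estimate, with a bounded mean oscillation bound for $\log(u+d)$.
\item The John--Nirenberg inequality then yields the crossover $\mint u^{t}\cdot\mint u^{-t}\le c$.
\item A De Giorgi iteration for negative powers (equivalently, for $(k-u)_+$) gives $\left(\mint u^{-t}\right)^{-1/t}\lesssim \inf u + d$.
\end{itemize}
The only nonlocal contribution is the negative part of $u$ outside $B_R$. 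It enters through $T$, scaled by $(\rho/R)^{sp}$, which after taking the $(p-1)$-th root produces the factor $(\rho/R)^{sp/(p-1)}$.

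\textbf{Step 2 (tail estimate).} Next, for a solution $u\ge0$ in $B_R$, I will prove
\[
\operatorname{Tail}(u_+;x_0,\rho)\le c\sup_{B_\rho}u+c\left(\frac{\rho}{R}\right)^{\frac{sp}{p-1}}T .
\]
\begin{itemize}
\item Set $k:=\sup_{B_\rho}u$ and test with $\varphi=(u-2k)\psi^p$, where $\psi$ is supported in $B_{\rho/2}$. On $\operatorname{supp}\psi$ we have $u-2k\le -k<0$.
\item For $y$ far away with $u(y)$ large, the integrand $j_p(u(x)-u(y))(u(x)-2k)$ has a sign. It produces a positive multiple of $k\,\rho^{n}\int_{\mathbb{R}^n\setminus B_\rho}u_+^{p-1}|y-x_0|^{-n-sp}\,\dd y$.
\item The local terms are controlled by $k^{p}\rho^{n-sp}$, and the contribution of $u_-$ outside $B_R$ by $k\rho^{n}R^{-sp}T^{p-1}$.
\item Dividing by $k\rho^{n-sp}$ gives the claim.
\end{itemize}
I expect this to be the main obstacle: one must track carefully the region between $B_\rho$ and $B_R$, where $u\ge0$, and the different scalings.

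\textbf{Step 3 (combination).} Apply \Cref{Boundedness} on $B_\rho$ with $\rho\le r$:
\[
\sup_{B_{\rho/2}}u\le\delta\operatorname{Tail}(u_+;x_0,\rho/2)+c\delta^{-\gamma}\left(\mint_{B_\rho}u^p\right)^{1/p}.
\]
\begin{itemize}
\item Insert Step 2 (at comparable radii) into the tail term, so that it becomes $c\delta\sup_{B_\rho}u+c\delta(\rho/R)^{sp/(p-1)}T$.
\item Interpolate $\left(\mint u^p\right)^{1/p}\le(\sup u)^{1-t/p}\left(\mint u^t\right)^{1/p}$ and use Young's inequality. This gives, for $r\le\rho'<\rho\le 2r$,
\[
\sup_{B_{\rho'}}u\le\tfrac12\sup_{B_\rho}u+\frac{c}{(\rho-\rho')^{n/t}}\|u\|_{L^t(B_{2r})}+c\left(\frac{r}{R}\right)^{\frac{sp}{p-1}}T .
\]
\item The standard iteration lemma for such inequalities absorbs the supremum, yielding
\[
\sup_{B_r}u\le c\left(\mint_{B_{2r}}u^t\right)^{1/t}+c\left(\frac{r}{R}\right)^{\frac{sp}{p-1}}T .
\]
\item Finally, Step 1 on $B_{2r}$ bounds the average by $\inf_{B_{2r}}u\le\inf_{B_r}u$, plus the same tail term.
\end{itemize}
For $r$ close to $R/2$, the radii must be adjusted by harmless constant factors, for instance by a covering argument or by applying the estimates with $R$ replaced by a comparable radius. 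This explains why the statement is made for $r\le R/2$.
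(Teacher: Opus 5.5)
The paper gives no proof of this statement; it quotes it from \cite[Theorem 1.1]{CKP14}, so the relevant comparison is with that proof.

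Your Steps 2 and 3 follow its structure. Step 2 is \cite[Lemma 4.2]{CKP14}, which the paper quotes as \Cref{Tail estimate01}; it is proved there by testing with $(u-2\sup_{B_\rho}u)\psi^p$, exactly as you propose. Step 3 (local boundedness, replacing $\operatorname{Tail}(u_+)$ by the supremum, interpolating between $\mathrm{L}^t$ and $\mathrm{L}^\infty$, absorbing with an iteration lemma, then applying the weak Harnack inequality) is how the cited proof concludes.

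The genuine difference is Step 1. In \cite{CKP14}, the logarithmic estimate is turned into a measure estimate for sublevel sets. This feeds an expansion-of-positivity lemma: if $\{u\ge k\}$ fills a fixed fraction of a ball, then $u\ge\delta k$ minus the scaled tail of $u_-$ on a larger concentric ball. A Krylov--Safonov covering argument then gives the level-set decay of $u$ directly, without BMO or John--Nirenberg. Your Moser-type crossover route is also valid, provided you make two points explicit; throughout, $T=\operatorname{Tail}(u_-;x_0,R)$ as in your notation.

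First, John--Nirenberg needs the oscillation bound on every sub-ball $B_{\rho'}(y)\subset B_\rho(x_0)$. So the logarithmic lemma must be applied with tails centered at $y\neq x_0$. Since $u_-=0$ in $B_R(x_0)$, taking $R'=R-|y-x_0|$ gives $(\rho'/R')^{sp}\operatorname{Tail}(u_-;y,R')^{p-1}\le c(\rho'/R)^{sp}T^{p-1}$. Hence the single choice $d=(\rho/R)^{sp/(p-1)}T$ works on all sub-balls.

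Second, the crossover holds for $u+d$, not $u$. The bound $\bigl(\mint_{B_\rho}(u+d)^{-t}\,\dd x\bigr)^{-1/t}\le c\inf_{B_{\rho/2}}u+cd$ follows from Chebyshev's inequality together with \Cref{Boundedness} applied to the subsolution $k-u$. Its tail is at most $ck+c(\rho/R)^{sp/(p-1)}T$ because $u\ge0$ in $B_R(x_0)$. The infimum lands on a smaller ball, which your final adjustment of radii must absorb.

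The covering route avoids BMO entirely and uses only measure-to-pointwise information. Yours is more direct once the logarithmic lemma is available uniformly on sub-balls.

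A smaller imprecision is in Step 3. \Cref{Boundedness} centered at $x_0$ on $B_\rho$ controls only $\sup_{B_{\rho/2}}u$. To reach $\sup_{B_{\rho'}}u$ with $\rho'$ close to $\rho$, you must apply it on balls of radius comparable to $\rho-\rho'$ centered at $y\in B_{\rho'}$, and recenter the tail as in \Cref{Tail estimate1}. This costs a factor of order $(\rho/(\rho-\rho'))^{n/(p-1)}$, so $\delta$ must depend on $\rho-\rho'$. As a result, the coefficient of $\|u\|_{\mathrm{L}^t(B_{2r})}$ is a larger negative power of $\rho-\rho'$ than $n/t$. This does not affect the iteration lemma.
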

		
		\begin{proposition}[Tail estimate]\label{Tail estimate01}
			Let $s\in(0,1)$, $p\in(1,+\infty)$, and let
			$u\in\mathrm{W}^{s,p}(\mathbb{R}^n)$ be a weak solution of $\mathcal L u=0$ in
			$\Omega$. Assume $u\geq0$ in $B_R(x_0)\subset\Omega$. Then, for every $0<r<R$,
			\[
			\operatorname{Tail}(u_+;x_0,r)
			\leq
			c\sup_{B_r(x_0)}u
			+
			c
			\left(\frac{r}{R}\right)^{\frac{sp}{p-1}}
			\operatorname{Tail}(u_-;x_0,R),
			\]
			where $c>0$ depends only on $n,p$ and $s$.
		\end{proposition}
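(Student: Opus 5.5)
The plan is to test the weak formulation with a negative function localized in $B_r(x_0)$, in the spirit of \cite[Lemma 4.2]{CKP14}. Set $k:=\sup_{B_r(x_0)}u+\varepsilon$ with $\varepsilon>0$, so that $k>0$ even if $u\equiv0$ in $B_r$; we let $\varepsilon\to0$ at the end. Define $w:=u-2k$. Since $0\le u\le k$ in $B_r(x_0)$, we have $-2k\le w\le -k$ there. Fix a cutoff $\psi\in C^\infty_c(B_{3r/4}(x_0))$ with $0\le\psi\le1$, $\psi\equiv1$ on $B_{r/2}(x_0)$, and $|\nabla\psi|\le c/r$. Then $\varphi:=w\psi^p$ is an admissible test function in $\mathrm{W}^{s,p}_0(B_r(x_0))$, so $\mathcal E(u,w\psi^p)=0$.

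Split the double integral into the local part over $B_r\times B_r$ and twice the nonlocal part over $B_r\times(\mathbb{R}^n\setminus B_r)$.

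\textbf{Local part.} On $B_r\times B_r$ we have $u(x)-u(y)=w(x)-w(y)$. The standard algebraic inequality for $j_p(a-b)(a\psi_1^p-b\psi_2^p)$ (as in the Caccioppoli estimate of \cite{CKP14}) bounds the local part from below by
\[
-c\int_{B_r}\int_{B_r}\max\{|w(x)|,|w(y)|\}^p\frac{|\psi(x)-\psi(y)|^p}{|x-y|^{n+sp}}\,\dd x\,\dd y\;\ge\;-c\,k^pr^{n-sp},
\]
using $|w|\le2k$ in $B_r$ and $|\psi(x)-\psi(y)|\le c|x-y|/r$.

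\textbf{Nonlocal part.} The integrand is $j_p(u(x)-u(y))\,w(x)\psi^p(x)\,|x-y|^{-n-sp}$ with $x\in B_{3r/4}$ and $y\notin B_r$. Since $w(x)<0$, I split according to the sign of $u(x)-u(y)$.
\begin{itemize}
\item If $u(y)>u(x)$, the product is positive and at least $k\,\psi^p(x)\,(u(y)-k)_+^{p-1}|x-y|^{-n-sp}$.
\item If $u(y)\le u(x)$, the product is negative, and $|u(x)-u(y)|\le k+u_-(y)$. Moreover $u_-=0$ in $B_R$.
\end{itemize}
Hence the negative contribution is at most
\[
c\,k\int_{B_r}\int_{\mathbb{R}^n\setminus B_r}\frac{k^{p-1}+u_-(y)^{p-1}\chi_{\mathbb{R}^n\setminus B_R}(y)}{|x-y|^{n+sp}}\,\dd y\,\dd x
\;\le\; c\,k^pr^{n-sp}+c\,k\,r^nR^{-sp}\operatorname{Tail}(u_-;x_0,R)^{p-1}.
\]
Here I use that, for $x\in B_{3r/4}$ and $y\notin B_r$, $|y-x_0|\le4|x-y|$, so $|x-y|$ and $|y-x_0|$ are comparable.

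\textbf{Conclusion.} Combining with $\mathcal E(u,w\psi^p)=0$ and using $\psi=1$ on $B_{r/2}$ (a set of measure $\simeq r^n$) together with the same comparability, we get
\[
k\,r^n\int_{\mathbb{R}^n\setminus B_r}\frac{(u(y)-k)_+^{p-1}}{|y-x_0|^{n+sp}}\,\dd y\;\le\; c\,k^pr^{n-sp}+c\,k\,r^nR^{-sp}\operatorname{Tail}(u_-;x_0,R)^{p-1}.
\]
Dividing by $k\,r^n$, multiplying by $r^{sp}$, and using $u_+^{p-1}\le c\bigl(k^{p-1}+(u-k)_+^{p-1}\bigr)$ gives
\[
\int_{\mathbb{R}^n\setminus B_r}\frac{k^{p-1}}{|y-x_0|^{n+sp}}\,\dd y\le c\,k^{p-1}r^{-sp},
\]
and therefore
\[
\operatorname{Tail}(u_+;x_0,r)^{p-1}\le c\,k^{p-1}+c\,(r/R)^{sp}\operatorname{Tail}(u_-;x_0,R)^{p-1}.
\]
Taking $(p-1)$-th roots and letting $\varepsilon\to0$ yields the claim with exponent $sp/(p-1)$.

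The main obstacle is the sign bookkeeping in the nonlocal term. The positive contribution must be extracted with the correct lower bound $(u(y)-k)_+^{p-1}$, while in the negative contribution the large values of $u_-$ must be confined to $\mathbb{R}^n\setminus B_R$ (where the nonnegativity of $u$ in $B_R$ is essential). The local Caccioppoli-type bound is routine.
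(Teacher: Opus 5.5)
Your argument is correct and is essentially the proof behind this statement: the paper does not prove it but imports it from \cite[Lemma 4.2]{CKP14}, whose proof is this same test with $(u-2\sup_{B_r}u)\psi^p$, a Caccioppoli-type lower bound on $B_r\times B_r$, and a sign splitting of the nonlocal term that uses $u_-=0$ in $B_R$. One cosmetic fix: in the display bounding the negative contribution, the $x$-integration should run over $\operatorname{supp}\psi\subset B_{3r/4}$ rather than $B_r$, since $\int_{B_r}\int_{\mathbb{R}^n\setminus B_r}|x-y|^{-n-sp}\,\dd y\,\dd x$ diverges when $sp\geq1$ (your comparability step already assumes $x\in B_{3r/4}$).
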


\subsection{Auxiliary estimates}

We now collect some auxiliary estimates that are used to prove the local
Hölder regularity of minimizers of \eqref{main functional}. We begin by
introducing the fractional \(p\)-harmonic replacement in the setting of the
present paper. This replacement provides a natural comparison map for the
fractional energy, while the additional estimates below control the energy
gap and the nonlocal tails arising from the interaction with the exterior
datum.

\begin{definition}[Frational $p$-harmonic replacement]\label{def:harmonic-replacement}
	Let \(B\subset\mathbb{R}^n\) be a bounded open set, and let
	\[
	u\in \mathrm{W}^{s,p}_{\mathrm{loc}}(\mathbb{R}^n)
	\cap \mathrm{L}^{p-1}_{sp}(\mathbb{R}^n).
	\]
	We say that \(v\) is the fractional \(p\)-harmonic replacement of
	\(u\) in \(B\) if $v-u\in \mathrm{W}^{s,p}_0(B)$ and
	\[
	(-\Delta)^s_p v=0
	\quad\text{weakly in }B.
	\]
\end{definition}

We next record a simple tail estimate for harmonic replacements. The point is
that the tail of the replacement at scale \(R\) splits into an annular part,
where the function may have changed, and a far-away part, where the
replacement coincides with the original function.

\begin{lemma}[Tail estimate for harmonic replacements]\label{TailLema}
	Let \(p>1\), \(s\in(0,1)\), and let $u\in \mathrm{L}^{p-1}_{sp}(\mathbb{R}^n)$. Let \(v\) be the fractional $p$-harmonic replacement of \(u\) in
	\(B_{2R}(x_0)\). Then,
	\[
	\operatorname{Tail}(v;x_0,R)
	\leq
	c\, R^{-\frac np}
	\|v\|_{\mathrm{L}^p(B_{2R}(x_0)\setminus B_R(x_0))}
	+
	c\,\operatorname{Tail}(u;x_0,2R),
	\]
	where \(c>0\) depends only on \(n,p\), and \(s\).
\end{lemma}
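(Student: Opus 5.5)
We split the integral defining $\operatorname{Tail}(v;x_0,R)^{p-1}$ into the annulus $A:=B_{2R}(x_0)\setminus B_R(x_0)$ and the exterior $\mathbb{R}^n\setminus B_{2R}(x_0)$, where $v=u$ by the definition of the replacement ($v-u\in \mathrm{W}^{s,p}_0(B_{2R}(x_0))$). This gives
\[
\operatorname{Tail}(v;x_0,R)^{p-1}
=
R^{sp}\int_{A}\frac{|v(y)|^{p-1}}{|y-x_0|^{n+sp}}\,\dd y
+
R^{sp}\int_{\mathbb{R}^n\setminus B_{2R}(x_0)}\frac{|u(y)|^{p-1}}{|y-x_0|^{n+sp}}\,\dd y .
\]

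For the exterior part, we use $R^{sp}=2^{-sp}(2R)^{sp}$. This identifies that term with $2^{-sp}\operatorname{Tail}(u;x_0,2R)^{p-1}$, which is at most $\operatorname{Tail}(u;x_0,2R)^{p-1}$.

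For the annular part, we note that $|y-x_0|\ge R$ on $A$. Hence the term is bounded by $R^{-n}\int_A|v|^{p-1}\,\dd y$. Since $p-1<p$, Hölder's inequality with exponents $p/(p-1)$ and $p$ yields
\[
R^{-n}\int_A|v|^{p-1}\,\dd y
\le
R^{-n}|A|^{1/p}\|v\|_{\mathrm{L}^p(A)}^{p-1}
\le
c\,R^{-n+n/p}\|v\|_{\mathrm{L}^p(A)}^{p-1}
=
c\left(R^{-n/p}\|v\|_{\mathrm{L}^p(A)}\right)^{p-1}.
\]
This is finite because $v\in \mathrm{W}^{s,p}_{\mathrm{loc}}$.

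Combining the two bounds gives $\operatorname{Tail}(v;x_0,R)^{p-1}\le c\,(a^{p-1}+b^{p-1})$, with $a=R^{-n/p}\|v\|_{\mathrm{L}^p(A)}$ and $b=\operatorname{Tail}(u;x_0,2R)$. Taking the $1/(p-1)$ power and using $(a^{p-1}+b^{p-1})^{1/(p-1)}\le 2^{\max\{0,\frac{1}{p-1}-1\}}(a+b)$, valid for all $p>1$, gives the claim with $c=c(n,p,s)$.

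There is no genuine obstacle here. The only points needing care are the Hölder step, where the exponent $p-1$ may be less than $1$ (harmless, since $p-1<p$ suffices), and the elementary power inequality used for $1<p<2$.
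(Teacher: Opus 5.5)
Your proposal is correct and follows essentially the same approach as the paper's proof: you split the tail into the annulus $B_{2R}(x_0)\setminus B_R(x_0)$ (bounded via $|y-x_0|\ge R$ and H\"older) and the exterior where $v=u$ (identified with $2^{-sp}\operatorname{Tail}(u;x_0,2R)^{p-1}$), then combine with the elementary power inequality. Your explicit treatment of the case $1<p<2$ in that last inequality is a careful touch that the paper leaves implicit in its constant $c_p$.
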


\begin{proof}
	Since \(v=u\) a.e. in \(\mathbb{R}^n\setminus B_{2R}(x_0)\), we split
	the tail as
	\[
		\operatorname{Tail}(v;x_0,R)^{p-1}
		=
		R^{sp}
		\int_{B_{2R}(x_0)\setminus B_R(x_0)}
		\frac{|v(x)|^{p-1}}{|x-x_0|^{n+sp}}\,\dd x
		+
		R^{sp}
		\int_{\mathbb{R}^n\setminus B_{2R}(x_0)}
		\frac{|u(x)|^{p-1}}{|x-x_0|^{n+sp}}\, \dd x.
	\]
	We estimate each term of the right-hand side separately, the annular contribution first. Since
	\(|x-x_0|\geq R\) in \(B_{2R}(x_0)\setminus B_R(x_0)\), Hölder's
	inequality gives
	\[
	\begin{split}
		\int_{B_{2R}(x_0)\setminus B_R(x_0)}
		\frac{|v(x)|^{p-1}}{|x-x_0|^{n+sp}}\,\dd x
		&\leq
		R^{-n-sp}
		\int_{B_{2R}(x_0)\setminus B_R(x_0)}
		|v(x)|^{p-1}\,\dd x
		\\[0.5em]
		&\leq
		c\, R^{-n-sp+\frac np}
		\|v\|_{\mathrm{L}^p(B_{2R}(x_0)\setminus B_R(x_0))}^{p-1}.
	\end{split}
	\]
	For the exterior contribution, we observe that, by the definition of the tail at scale
	\(2R\),
	\[
	\begin{aligned}
		R^{sp}
		\int_{\mathbb{R}^n\setminus B_{2R}(x_0)}
		\frac{|u(x)|^{p-1}}{|x-x_0|^{n+sp}}\,\dd x
		=
		2^{-sp}\operatorname{Tail}(u;x_0,2R)^{p-1}.
	\end{aligned}
	\]
	Combining these estimates with the elementary estimate
	$(a+b)^{\frac{1}{p-1}}
	\leq
	c_p(a^{\frac{1}{p-1}}+b^{\frac{1}{p-1}})$,
	the proof is complete.
\end{proof}

The estimate above uses only the coincidence \(v=u\) in \(\mathbb{R}^n\setminus B_{2R}(x_0)\), and not the fact that \(v\) is a harmonic replacement of $u$. Although the replacement modifies the function inside \(B_{2R}(x_0)\),
its tail at the smaller scale \(R\) is affected only through the intermediate
annulus. The contribution from the far exterior is exactly inherited from the
original datum.

The next lemma quantifies the energy decrease produced by the harmonic
replacement. It says that the loss of energy from $u$ to its replacement
controls the fractional seminorm of their difference.

\begin{lemma}[Energy gap for the harmonic replacement]\label{lemaTFC}
	Let $p\geq 2$, $s\in(0,1)$, and let $B\subset\mathbb{R}^n$ be a
	bounded open set. Let $u$ be admissible, and let $v$ be the
	fractional $p$-harmonic replacement of $u$ in $B$. Then, there exists
	a constant $c>0$, depending only on $p$, such that
	\[
	\int\!\!\int_{Q_B}
	\frac{|u(x)-u(y)|^p-|v(x)-v(y)|^p}{|x-y|^{n+sp}}
	\,\dd x\,\dd y
	\geq
	c
	\int\!\!\int_{Q_B}
	\frac{|(u-v)(x)-(u-v)(y)|^p}{|x-y|^{n+sp}}
	\,\dd x\,\dd y,
	\] where $Q_B := (B^c\times B^c)^c$.
\end{lemma}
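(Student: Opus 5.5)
The plan is to use a pointwise algebraic inequality for $p\ge2$ and then the weak equation satisfied by $v$, tested with $\varphi=u-v$.

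\textbf{Pointwise inequality.} For $p\ge2$ there is $c=c(p)>0$ such that, for all $a,b\in\mathbb{R}$,
\[
|a|^p\ \ge\ |b|^p+p\,j_p(b)(a-b)+c\,|a-b|^p .
\]
This follows from the uniform convexity of $t\mapsto|t|^p$ when $p\ge2$. One way to see it is to write
\[
|a|^p-|b|^p-p\,j_p(b)(a-b)=p(p-1)\int_0^1(1-\tau)\,|b+\tau(a-b)|^{p-2}\,\dd\tau\,(a-b)^2 .
\]
Then use the standard lower bound $\int_0^1|b+\tau h|^{p-2}\,\dd\tau\ge c_p|h|^{p-2}$, valid for $p\ge2$. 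It holds because on at least half of $[0,1]$ we have $|b+\tau h|\ge|h|/4$.

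\textbf{Applying it to the energies.} Take $a=u(x)-u(y)$ and $b=v(x)-v(y)$, divide by $|x-y|^{n+sp}$, and integrate over $Q_B$. The left-hand side of the lemma is then bounded below by
\[
p\int\!\!\int_{Q_B}\frac{j_p(v(x)-v(y))\bigl((u-v)(x)-(u-v)(y)\bigr)}{|x-y|^{n+sp}}\,\dd x\,\dd y
\;+\;c\int\!\!\int_{Q_B}\frac{|(u-v)(x)-(u-v)(y)|^p}{|x-y|^{n+sp}}\,\dd x\,\dd y .
\]

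\textbf{The cross term vanishes.} The function $\varphi:=u-v$ lies in $\mathrm{W}^{s,p}_0(B)$, so it vanishes a.e. on $B^c$. On $B^c\times B^c$ the integrand of the cross term is therefore zero. Hence the cross term over $Q_B$ equals $p\,\mathcal E(v,\varphi)$, and this is $0$ because $v$ is weakly fractional $p$-harmonic in $B$ and $\varphi$ is an admissible test function.

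\textbf{Main obstacle: integrability.} Each integral must be finite so the manipulations are legitimate.
\begin{itemize}
\item The weak formulation must allow $\varphi\in\mathrm{W}^{s,p}_0(B)$ as a test function when $v$ is only in $\mathrm{W}^{s,p}_{\mathrm{loc}}\cap\mathrm{L}^{p-1}_{sp}$. The double integral of $j_p(v(x)-v(y))(\varphi(x)-\varphi(y))$ over $Q_B$ converges by Hölder's inequality near $B$ and by the tail condition far from $B$, using that $\varphi$ is supported in the bounded set $B$.
\item The energy $\int\!\!\int_{Q_B}|v(x)-v(y)|^p|x-y|^{-n-sp}$ is finite. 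This follows from $v=u+\varphi$, from $u$ having finite energy on $Q_B$ (the admissibility assumption), and from $\varphi\in\mathrm{W}^{s,p}(\mathbb{R}^n)$.
\end{itemize}
With these checks, rearranging gives the claimed inequality with $c$ depending only on $p$.
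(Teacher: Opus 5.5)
Your proposal is correct and is essentially the paper's argument: the paper obtains the same pointwise inequality by writing $|a|^p-|b|^p=p\int_0^1 j_p(b+\tau(a-b))(a-b)\,\mathrm{d}\tau$, subtracting the vanishing term $\mathcal E(v,u-v)=0$, and applying the monotonicity bound $(j_p(a)-j_p(b))(a-b)\ge 2^{2-p}|a-b|^p$ (giving $c=2^{2-p}$), rather than using your second-order Taylor expansion. The only slip is that your Taylor remainder carries the weight $(1-\tau)$, so the unweighted bound $\int_0^1|b+\tau h|^{p-2}\,\mathrm{d}\tau\ge c_p|h|^{p-2}$ cannot be invoked verbatim; your ``half of $[0,1]$'' argument still closes this, since $\int_E(1-\tau)\,\mathrm{d}\tau\ge 1/8$ for every $E\subset[0,1]$ with $|E|\ge 1/2$.
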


\begin{proof}
	For $\tau\in[0,1]$, we denote $u_\tau:=\tau u+(1-\tau)v$ and apply the fundamental theorem of calculus. Since $v$ is the fractional $p$-harmonic replacement of $u$ in $B$,
	we have $w:=u-v\in \mathrm{W}^{s,p}_0(B)$ and
	\begin{equation}\label{eqn:harm-repl-soln}
		\int\!\!\int_{Q_B}
		\frac{
			j_p\big(v(x)-v(y)\big)\big(w(x)-w(y)\big)
		}
		{|x-y|^{n+sp}}
		\,\dd x\,\dd y
		=0.
	\end{equation}
	By the fundamental theorem of calculus, we have
	\[
	\begin{split}
		\mathbb{I}_{u,v} &:= \int\!\!\int_{Q_B}
		\frac{|u(x)-u(y)|^p-|v(x)-v(y)|^p}{|x-y|^{n+sp}}
		\,\dd x\,\dd y
		\\
		& =
		p\int_0^1
		\int\!\!\int_{Q_B}
		\frac{
			j_p\big(u_\tau(x)-u_\tau(y)\big)\big(w(x)-w(y)\big)
		}
		{|x-y|^{n+sp}}
		\,\dd x\,\dd y\,\dd\tau .
	\end{split}
	\]
	Using \eqref{eqn:harm-repl-soln}, we
	obtain
	\[
		\mathbb{I}_{u,v} =
		p\int_0^1
		\int\!\!\int_{Q_B}
		\frac{
			\left[
			j_p(u_\tau(x)-u_\tau(y))-j_p(v(x)-v(y))
			\right]
			\big(w(x)-w(y)\big)
		}
		{|x-y|^{n+sp}}
		\,\dd x\,\dd y\,\dd\tau.
	\]
	For $\tau>0$, observe
	\[
	w(x)-w(y)
	=
	\frac{
		\big(u_\tau(x)-v(x)\big)-\big(u_\tau(y)-v(y)\big)
	}{\tau};
	\]
	hence, together with the elementary monotonicity inequality
	\[
	\bigl(j_p(a)-j_p(b)\bigr)(a-b)
	\geq
	2^{2-p}|a-b|^p,
	\quad a,b\in\mathbb{R},
	\]
	we have
	\[
	\mathbb{I}_{u,v} \geq
		c
		\int_0^1
		\tau^{-1}
		\int\!\!\int_{Q_B}
		\frac{
			\big|(u_\tau-v)(x)-(u_\tau-v)(y)\big|^p
		}
		{|x-y|^{n+sp}}
		\,\dd x\,\dd y\,\dd\tau.
	\]
	Finally, $u_\tau-v=\tau(u-v)=\tau w=\tau (u-v)$, we conclude
	\[
	\mathbb{I}_{u,v} \geq
		c
		\int_0^1 \tau^{p-1}\,\dd\tau
		\int\!\!\int_{Q_B}
		\frac{|(u-v)(x)-(u-v)(y)|^p}{|x-y|^{n+sp}}
		\,\dd x\,\dd y. \qedhere
	\]
\end{proof}

The next elementary estimate allows us to change the center of a tail, provided
the new center remains in a smaller concentric ball. The price is a local
\(\mathrm{L}^{\infty}\)-term and the tail at the original center with the
larger radius.

\begin{lemma}[Change of center for tails]\label{Tail estimate1}
	Let $0<R_1<R_2$, fix $x_0\in\mathbb{R}^n$, and let $u\in \mathrm{L}^{\infty}(B_{R_2}(x_0))
	\cap
	\mathrm{L}^{p-1}_{sp}(\mathbb{R}^n)$.
	Then, for every $0<R<R_2-R_1$ and every $y\in B_{R_1}(x_0)$, we have
	\[
	\operatorname{Tail}(u;y,R)
	\leq
	c\, \Big(\|u\|_{\mathrm{L}^{\infty}(B_{R_2}(x_0))}
	+
	\operatorname{Tail}(u;x_0,R_2)\Big),
	\]
	where $c>0$ depends only on $n,p,s$ and $R_2/(R_2-R_1)$.
\end{lemma}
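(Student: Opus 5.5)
We split the tail integral defining $\operatorname{Tail}(u;y,R)^{p-1}$ into a near part and a far part. The near part is
\[
R^{sp}\int_{B_{R_2}(x_0)\setminus B_R(y)}\frac{|u(z)|^{p-1}}{|z-y|^{n+sp}}\,\dd z ,
\]
and the far part is
\[
R^{sp}\int_{\mathbb{R}^n\setminus B_{R_2}(x_0)}\frac{|u(z)|^{p-1}}{|z-y|^{n+sp}}\,\dd z .
\]
Note that $B_R(y)\subset B_{R_2}(x_0)$, since $|y-x_0|+R<R_1+(R_2-R_1)=R_2$. So the complement of $B_R(y)$ is exactly the union of the two integration regions above.

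\emph{Near part.} Here we use the $\mathrm{L}^\infty$ bound $|u|^{p-1}\le \|u\|_{\mathrm{L}^\infty(B_{R_2}(x_0))}^{p-1}$. We enlarge the domain of integration to $\mathbb{R}^n\setminus B_R(y)$ and compute in polar coordinates:
\[
R^{sp}\int_{\mathbb{R}^n\setminus B_R(y)}|z-y|^{-n-sp}\,\dd z=\frac{|\mathbb{S}^{n-1}|}{sp}.
\]
Hence the near part is bounded by $c(n,s,p)\,\|u\|_{\mathrm{L}^\infty(B_{R_2}(x_0))}^{p-1}$, with no dependence on $R$.

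\emph{Far part.} This is the only step requiring geometry: we must compare $|z-y|$ with $|z-x_0|$ when $|z-x_0|\ge R_2$. Since $|y-x_0|<R_1$,
\[
|z-y|\ge |z-x_0|-R_1\ge \Big(1-\tfrac{R_1}{R_2}\Big)|z-x_0|=\frac{R_2-R_1}{R_2}\,|z-x_0|.
\]
Therefore $|z-y|^{-n-sp}\le (R_2/(R_2-R_1))^{n+sp}|z-x_0|^{-n-sp}$. Moreover, $R^{sp}\le R_2^{sp}$, because $R<R_2-R_1<R_2$. Together these show that the far part is at most
\[
\Big(\frac{R_2}{R_2-R_1}\Big)^{n+sp}\operatorname{Tail}(u;x_0,R_2)^{p-1}.
\]

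\emph{Conclusion.} We add the two bounds and take the $1/(p-1)$ power, using the elementary inequality $(a+b)^{1/(p-1)}\le c_p\big(a^{1/(p-1)}+b^{1/(p-1)}\big)$ as in \Cref{TailLema}. This gives the claim with a constant $c$ depending only on $n,p,s$ and $R_2/(R_2-R_1)$. There is no real obstacle. The only point to watch is that the constant must not depend on $R$ itself. This is secured because the near part was bounded uniformly in $R$, and the far part only needed $R\le R_2$.
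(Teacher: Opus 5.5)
Your proof is correct and matches the paper's argument. You use the same split at $\partial B_{R_2}(x_0)$, the same $\mathrm{L}^\infty$ bound with an $R$-independent kernel integral for the near part, and the same comparison $|z-y|\ge \frac{R_2-R_1}{R_2}|z-x_0|$ with $R<R_2$ for the far part; integrating over all of $\mathbb{R}^n\setminus B_R(y)$ instead of the paper's $B_{2R_2}(y)\setminus B_R(y)$ is immaterial, and your check that $B_R(y)\subset B_{R_2}(x_0)$ justifies the exact splitting the paper writes as an equality without comment.
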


\begin{proof}
	We split the tail into the contribution coming from the bounded region
	$B_{R_2}(x_0)$ and from its complement. Namely,
	\[
		\operatorname{Tail}(u;y,R)^{p-1}
		=
		R^{sp}
		\int_{B_{R_2}(x_0)\setminus B_R(y)}
		\frac{|u(x)|^{p-1}}{|x-y|^{n+sp}}
		\,\dd x +
		R^{sp}
		\int_{\mathbb{R}^n\setminus B_{R_2}(x_0)}
		\frac{|u(x)|^{p-1}}{|x-y|^{n+sp}}
		\,\dd x.
	\]
	For the first term, since $y\in B_{R_1}(x_0)$ and $R_1<R_2$, we have $B_{R_2}(x_0)\subset B_{2R_2}(y)$; thus,
	\[
	\begin{split}
		R^{sp}
		\int_{B_{R_2}(x_0)\setminus B_R(y)}
		\frac{|u(x)|^{p-1}}{|x-y|^{n+sp}}
		\,\dd x
		&\leq
		R^{sp}
		\|u\|_{\mathrm{L}^{\infty}(B_{R_2}(x_0))}^{p-1}
		\int_{B_{2R_2}(y)\setminus B_R(y)}
		\frac{1}{|x-y|^{n+sp}}
		\,\dd x
		\\[0.5em]
		&\leq
		C\|u\|_{\mathrm{L}^{\infty}(B_{R_2}(x_0))}^{p-1}.
	\end{split}
	\]
	For the second term, if $x\in\mathbb{R}^n\setminus B_{R_2}(x_0)$
	and $y\in B_{R_1}(x_0)$, then
	\[
	|x-y|
	\geq
	|x-x_0|-|y-x_0|
	\geq
	|x-x_0|-R_1
	\geq
	\frac{R_2-R_1}{R_2}|x-x_0|.
	\]
	Since $R<R_2-R_1<R_2$, we obtain
	\[
	\begin{split}
		R^{sp}
		\int_{\mathbb{R}^n\setminus B_{R_2}(x_0)}
		\frac{|u(x)|^{p-1}}{|x-y|^{n+sp}}
		\,\dd x
		\leq
		C
		R_2^{sp}
		\int_{\mathbb{R}^n\setminus B_{R_2}(x_0)}
		\frac{|u(x)|^{p-1}}{|x-x_0|^{n+sp}}
		\,\dd x=
		C\operatorname{Tail}(u;x_0,R_2)^{p-1}.
	\end{split}
	\]
	Combining these two estimates and taking the power $1/(p-1)$, the proof is complete.
\end{proof}

Thus, tails are stable under changes of center away from the boundary of the
larger ball. This allows us to estimate tails centered at arbitrary points
inside \(B_{R_1}(x_0)\) by a fixed tail centered at \(x_0\).

\section{Existence and first properties of minimizers}\label{sec:existence-first-properties}

In this section, we establish the existence of minimizers for the functional
\eqref{main functional} and prove the elementary consequences of minimality of \Cref{thm:intro-existence}.
These facts provide the variational starting point for the regularity theory
developed in the sequel.

We begin with the compactness part of the variational theory. The direct method
applies to the fractional energy once the exterior datum is fixed, since the
only non-fixed contribution outside $\Omega$ is the interaction between
$\Omega$ and its complement.

\begin{proposition}
	Let $\Omega\subset\mathbb{R}^n$ be a bounded domain and let
	$g\in \mathrm{W}^{s,p}(\mathbb{R}^n)$, with
	$g\geq 0$ a.e. in $\mathbb{R}^n\setminus\Omega$. Then, there exists a
	minimizer of $\mathcal{I}_M$ over the class $\mathcal{A}_{g}$.
\end{proposition}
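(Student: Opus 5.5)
We use the direct method of the calculus of variations in $\mathcal A_g(\Omega)$. First, the infimum is finite: $g\in\mathcal A_g(\Omega)$, so
\[
m:=\inf_{\mathcal A_g(\Omega)}\mathcal I_M\le [g]^p_{\mathrm{W}^{s,p}(\mathbb{R}^n)}+M|\Omega|<\infty,
\]
and $\mathcal I_M\ge0$. We take a minimizing sequence $(u_k)\subset\mathcal A_g(\Omega)$ with $\mathcal I_M(u_k)\to m$. Then $[u_k]_{\mathrm{W}^{s,p}(\mathbb{R}^n)}$ is uniformly bounded. To bound the $\mathrm{L}^p$ norms, we write $u_k=g+w_k$ with $w_k\in \mathrm{W}^{s,p}_0(\Omega)$. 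Since $\Omega$ is bounded, the fractional Poincaré inequality gives $\|w_k\|_{\mathrm{L}^p}\le C(n,s,p,\Omega)[w_k]_{\mathrm{W}^{s,p}}$. Moreover, $[w_k]\le[u_k]+[g]$. Hence $(w_k)$, and therefore $(u_k)$, is bounded in $\mathrm{W}^{s,p}(\mathbb{R}^n)$.

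\textbf{Compactness.} By reflexivity ($p\ge2$), a subsequence satisfies $w_k\rightharpoonup w$ weakly in $\mathrm{W}^{s,p}(\mathbb{R}^n)$. Since the $w_k$ are supported in the bounded set $\Omega$, the compact embedding $\mathrm{W}^{s,p}\hookrightarrow \mathrm{L}^p$ on bounded sets gives $w_k\to w$ strongly in $\mathrm{L}^p(\mathbb{R}^n)$. Passing to a further subsequence, $w_k\to w$ a.e. in $\mathbb{R}^n$. The limit satisfies $w=0$ a.e. outside $\Omega$, so $u:=g+w\in\mathcal A_g(\Omega)$ and $u_k\to u$ a.e.

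\textbf{Lower semicontinuity of the energy term.} By Fatou's lemma applied to the integrand $|u_k(x)-u_k(y)|^p|x-y|^{-n-sp}$, which converges a.e. in $\mathbb{R}^{2n}$, we get
\[
[u]^p_{\mathrm{W}^{s,p}}\le\liminf_k [u_k]^p_{\mathrm{W}^{s,p}}.
\]

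\textbf{Lower semicontinuity of the volume term.} This is the step needing care, since $u\mapsto|\{u>0\}\cap\Omega|$ is not continuous. However, it is lower semicontinuous under a.e. convergence. Indeed, if $u(x)>0$ and $u_k(x)\to u(x)$, then $u_k(x)>0$ for all large $k$. Thus
\[
\chi_{\{u>0\}\cap\Omega}\le\liminf_k\chi_{\{u_k>0\}\cap\Omega}\quad\text{a.e.},
\]
and Fatou's lemma on the bounded set $\Omega$ yields $|\{u>0\}\cap\Omega|\le\liminf_k|\{u_k>0\}\cap\Omega|$.

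\textbf{Conclusion.} Adding the two inequalities and using $\liminf a_k+\liminf b_k\le\liminf(a_k+b_k)$, we obtain $\mathcal I_M(u)\le\liminf_k\mathcal I_M(u_k)=m$. Hence $u$ is a minimizer. The hypothesis $g\ge0$ is not needed for existence; it is used only later for nonnegativity.
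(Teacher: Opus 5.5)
Your proposal is correct and follows essentially the same route as the paper: a Poincaré bound on $u_k-g$, compactness to extract an a.e.\ convergent subsequence, Fatou's lemma for the Gagliardo seminorm, and lower semicontinuity of $|\{u>0\}\cap\Omega|$ under a.e.\ convergence. Your version is slightly more careful in two places. You justify the volume-term semicontinuity through the pointwise inequality $\chi_{\{u>0\}}\le\liminf_k\chi_{\{u_k>0\}}$, which the paper only asserts, and you apply the compact embedding using the bounded support of $u_k-g$, which avoids needing $\Omega$ to be an extension domain.
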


\begin{proof}
	Let $\{u_k\}_{k\in\mathbb{N}}\subset \mathrm{W}^{s,p}(\mathbb{R}^n)$, with that $u_k=g$ a.e. in
	$\mathbb{R}^n\setminus\Omega$, be a
	minimizing sequence. Since $g$ is an admissible competitor, there
	exists a constant $C>0$ such that, for every $k\in\mathbb{N}$,
	\[
	[u_k]_{\mathrm{W}^{s,p}(\mathbb{R}^n)}^p
	+
	M|\{u_k>0\}\cap\Omega|
	\leq C
	\]
	Since $u_k-g=0$ a.e. in $\mathbb{R}^n\setminus\Omega$, the fractional
	Poincaré--Friedrichs inequality, see \cite[Lemma 4.7]{C17}, gives
	\[
	\|u_k-g\|_{\mathrm{L}^p(\Omega)}
	\leq
	C [u_k-g]_{\mathrm{W}^{s,p}(\mathbb{R}^n)}
	\leq
	C\big(
	[u_k]_{\mathrm{W}^{s,p}(\mathbb{R}^n)}
	+
	[g]_{\mathrm{W}^{s,p}(\mathbb{R}^n)}
	\big)
	\leq C.
	\]
	Hence $(u_k-g)_{k\in\mathbb{N}}$ is bounded in
	$\mathrm{W}^{s,p}(\Omega)$. By the compact embedding theorem for
	fractional Sobolev spaces \cite[Theorem 7.1]{NPV12}, up to a subsequence,
	there exists $w\in \mathrm{L}^p(\Omega)$ such that
	\[
	u_k-g\to w
	\quad\text{in }\mathrm{L}^p(\Omega)
	\quad
	\text{and}
	\quad
	u_k-g\to w
	\quad\text{a.e. in }\Omega.
	\]
	Extending $w$ by zero to $\mathbb{R}^n\setminus\Omega$, define
	\[
	u:=g+w.
	\]
	Then $u=g$ a.e. in $\mathbb{R}^n\setminus\Omega$, and
	$u_k\to u$ a.e. in $\mathbb{R}^n$.
	By Fatou's lemma,
	\[
	[u]_{\mathrm{W}^{s,p}(\mathbb{R}^n)}^p
	\leq
	\liminf_{k\to\infty} \,
	[u_k]_{\mathrm{W}^{s,p}(\mathbb{R}^n)}^p.
	\]
	Moreover, since $u_k\to u$ a.e. in $\Omega$, we have
	\[
	|\{u>0\}\cap\Omega|
	\leq
	\liminf_{k\to\infty}
	|\{u_k>0\}\cap\Omega|.
	\]
	Therefore, we have lower semicontinuity
	\[
	\mathcal{I}_M(u)
	\leq
	\liminf_{k\to\infty}
	\mathcal{I}_M(u_k).
	\]
	and $u$ is a minimizer of $\mathcal{I}_M$ over the class
	\eqref{minimizer}.
\end{proof}

The next lemma records the first variational consequence of minimality. Since
decreasing a minimizer by a nonnegative perturbation cannot increase the
positivity set, the energy variation alone yields a one-sided Euler--Lagrange
inequality. In addition, nonnegative exterior data force minimizers to be
nonnegative almost everywhere.

\begin{lemma}\label{weak subsolution}
	Let $u$ be a minimizer of $\mathcal{I}_{M}$ over the class
	\eqref{minimizer}, and assume that
	$g\geq 0$ a.e. in $\mathbb{R}^n\setminus\Omega$. Then $u$ is a weak
	subsolution of \eqref{maineq} in $\Omega$. Moreover,
	$u\geq 0$ a.e. in $\Omega$.
\end{lemma}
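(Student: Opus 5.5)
We prove the two claims separately, each time comparing $u$ with a competitor in $\mathcal A_g(\Omega)$ that does not enlarge the positivity set, so that the volume term can only decrease and minimality yields a purely energetic inequality.

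\emph{Subsolution property.} Fix a nonnegative $\varphi\in \mathrm{W}^{s,p}_0(\Omega)$ and $t>0$, and set $u_t:=u-t\varphi$. Then $u_t\in\mathcal A_g(\Omega)$, because $\varphi=0$ a.e. outside $\Omega$. Since $\varphi\ge0$, we have $\{u_t>0\}\subset\{u>0\}$, and minimality gives
\[
[u]_{\mathrm{W}^{s,p}(\mathbb{R}^n)}^p\le [u-t\varphi]_{\mathrm{W}^{s,p}(\mathbb{R}^n)}^p .
\]
Divide by $t$ and let $t\downarrow0$. The map $t\mapsto |a-tb|^p$ is $\mathrm C^1$ with derivative $-p\,j_p(a-tb)\,b$. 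For $t\in(0,1]$ this derivative is dominated by $p(|a|+|b|)^{p-1}|b|$, where $a=u(x)-u(y)$ and $b=\varphi(x)-\varphi(y)$. By Hölder's inequality this bound is integrable against $|x-y|^{-n-sp}$. Dominated convergence then lets us differentiate under the integral, and we obtain $0\le -p\,\mathcal E(u,\varphi)$, that is, $\mathcal E(u,\varphi)\le0$. This is exactly the weak subsolution inequality in $\Omega$.

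\emph{Nonnegativity.} We compare $u$ with $u_+=\max\{u,0\}$.
\begin{itemize}
\item Admissibility: $u_+\in \mathrm{W}^{s,p}(\mathbb{R}^n)$, and since $g\ge0$ outside $\Omega$ we have $u_+=g$ a.e. there, so $u_+\in\mathcal A_g(\Omega)$.
\item Volume term: $\{u_+>0\}=\{u>0\}$, so the volume terms coincide.
\item Energy: $|u_+(x)-u_+(y)|\le|u(x)-u(y)|$ pointwise, since truncation is $1$-Lipschitz, so the energy does not increase.
\end{itemize}
Minimality then forces equality of the seminorms, and we must upgrade this to $u_-=0$.

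For the upgrade, write $u=u_+-u_-$ and use the pointwise inequality
\[
|u(x)-u(y)|^p\ge |u_+(x)-u_+(y)|^p+|u_-(x)-u_-(y)|^p .
\]
If $x,y$ lie on the same side of zero, one of the two terms on the right vanishes and the inequality is an equality. If $u(x)>0>u(y)$, then $u(x)-u(y)=u_+(x)+u_-(y)$, and the inequality follows from $(a+b)^p\ge a^p+b^p$ for $a,b\ge0$ and $p\ge1$. Integrating gives
\[
[u]^p_{\mathrm{W}^{s,p}(\mathbb{R}^n)}\ge[u_+]^p_{\mathrm{W}^{s,p}(\mathbb{R}^n)}+[u_-]^p_{\mathrm{W}^{s,p}(\mathbb{R}^n)} .
\]
Combined with minimality, this yields $[u_-]_{\mathrm{W}^{s,p}(\mathbb{R}^n)}=0$, so $u_-$ is a.e. constant. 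Since $u_-=g_-=0$ outside $\Omega$, and $\mathbb{R}^n\setminus\Omega$ has positive measure because $\Omega$ is bounded, that constant is $0$. Hence $u\ge0$ a.e. in $\mathbb{R}^n$, in particular in $\Omega$.

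The only delicate points are the justification of differentiation under the integral sign, which is handled by the domination above, and the strictness step, which the splitting inequality handles cleanly.
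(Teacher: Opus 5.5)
Your proof is correct and follows the same route as the paper. You use the competitor $u-t\varphi$ with $\varphi\geq0$ for the subsolution inequality, where you also supply the dominated-convergence justification for differentiating under the integral that the paper omits, and the competitor $u_+$ for nonnegativity. The only difference is the strictness step. The paper argues that truncation strictly decreases the interaction between $\{u<0\}\cap\Omega$ and $\mathbb{R}^n\setminus\Omega$ because $g\geq0$ there. Your splitting inequality instead gives the quantitative gap $[u]^p_{\mathrm{W}^{s,p}(\mathbb{R}^n)}\geq[u_+]^p_{\mathrm{W}^{s,p}(\mathbb{R}^n)}+[u_-]^p_{\mathrm{W}^{s,p}(\mathbb{R}^n)}$, and hence $[u_-]_{\mathrm{W}^{s,p}(\mathbb{R}^n)}=0$. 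Both arguments are valid.
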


\begin{proof}
    Let $\varphi\in \mathrm{W}^{s,p}_0(\Omega)$ be nonnegative and let $t>0$. Since
			$\varphi=0$ a.e. in $\mathbb{R}^n\setminus\Omega$, the function
			$u-t\varphi$ is admissible class. Moreover,
			\[
			\{u-t\varphi>0\}\cap\Omega
			\subset
			\{u>0\}\cap\Omega.
			\] This inclusion, together with minimality,
			$\mathcal I_M(u)
			\leq
			\mathcal I_M(u-t\varphi)$, gives
			\[
			0
			\leq
			\frac{
				[u-t\varphi]_{\mathrm{W}^{s,p}(\mathbb{R}^n)}^p
				-
				[u]_{\mathrm{W}^{s,p}(\mathbb{R}^n)}^p
			}{t}.
			\]
			Letting $t\to 0^{+}$, we obtain
			\[
			0
			\leq
			-p
			\int_{\mathbb{R}^n}\int_{\mathbb{R}^n}
			\frac{
				j_p(u(x)-u(y))(\varphi(x)-\varphi(y))
			}
			{|x-y|^{n+sp}}
			\,\dd x\,\dd y.
			\]
			Hence $\mathcal E(u,\varphi)\leq0$, as desired.

	We now prove that $u\geq0$ a.e. in $\Omega$. Since
	$g\geq0$ a.e. in $\mathbb{R}^n\setminus\Omega$, the function $u_+$ is
	also admissible. Moreover, $\{u_+>0\}\cap\Omega
	=
	\{u>0\}\cap\Omega$ and the map $t\mapsto t_+$ is $1$-Lipschitz, we have
	\[
	[u_+]_{\mathrm{W}^{s,p}(\mathbb{R}^n)}^p
	\leq
	[u]_{\mathrm{W}^{s,p}(\mathbb{R}^n)}^p,
	\]
	and consequently $\mathcal{I}_M(u_+)
	\leq
	\mathcal{I}_M(u)$.
	
	If the set $\{u<0\}\cap\Omega$ had positive measure, then the above
	inequality would be strict because $u=g\geq0$ a.e. in
	$\mathbb{R}^n\setminus\Omega$, and the interaction between
	$\{u<0\}\cap\Omega$ and $\mathbb{R}^n\setminus\Omega$ would strictly
	decrease after replacing $u$ by $u_+$. This would contradict the
	minimality of $u$. Hence $u=u_+$ a.e. in $\Omega$, that is,
	$u\geq0$ a.e. in $\Omega$.
\end{proof}

As a consequence, minimizers are locally bounded. More precisely, the
subsolution estimate from \Cref{Boundedness} applies directly.

\begin{corollary}\label{Local bound to subsolution}
	Let $u$ be a minimizer of $\mathcal{I}_{M}$ over the class
	\eqref{minimizer}, with $g\geq0$ a.e. in
	$\mathbb{R}^n\setminus\Omega$. Then $u$ is locally bounded in
	$\Omega$. Moreover, if $B_R(x_0)\subset\Omega$, then
	\[
	\sup_{B_{R/2}(x_0)} u
	\leq
	\delta\operatorname{Tail}(u_+;x_0,R/2)
	+
	C\delta^{-\frac{(p-1)n}{sp^2}}
	\left(
	\mint_{B_R(x_0)} u_+^p\,\dd x
	\right)^{1/p}
	\]
	for every $\delta\in(0,1]$, where $C>0$ depends only on
	$n$, $p$, and $s$.
\end{corollary}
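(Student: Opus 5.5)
This corollary follows directly from \Cref{weak subsolution} and \Cref{Boundedness}. By \Cref{weak subsolution}, the minimizer $u$ is a weak subsolution of $\mathcal L u=0$ in $\Omega$. In the sense used in \Cref{Boundedness}, this means $\mathcal E(u,\varphi)\le 0$ for every nonnegative $\varphi\in \mathrm{W}^{s,p}_0(\Omega)$. Moreover, $u\in \mathrm{W}^{s,p}(\mathbb{R}^n)$ because $u\in\mathcal A_g(\Omega)$. So every hypothesis of \Cref{Boundedness} is met with $p\ge 2>1$. Applying that theorem on any ball $B_R(x_0)\subset\Omega$ gives exactly the displayed inequality, with $C$ equal to the constant $c=c(n,p,s)$ from \Cref{Boundedness}.

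Local boundedness then follows from finiteness of the right-hand side. Fix $\delta=1$. Since $u_+\le |u|$ and $u\in \mathrm{L}^p(\mathbb{R}^n)$, the average $\mint_{B_R(x_0)}u_+^p\,\dd x$ is finite. For the tail, I will show $u\in \mathrm{L}^{p-1}_{sp}(\mathbb{R}^n)$. On the set $\{|y-x_0|\ge R/2\}$ the kernel $|y-x_0|^{-n-sp}$ is bounded and lies in $\mathrm{L}^{q}$ for every $q\ge1$. Take the Hölder pair $p/(p-1)$ and $p$:
\[
\int_{\mathbb{R}^n\setminus B_{R/2}(x_0)}\frac{u_+^{p-1}}{|y-x_0|^{n+sp}}\,\dd y\le \|u\|_{\mathrm{L}^p(\mathbb{R}^n)}^{p-1}\Big\||\cdot-x_0|^{-n-sp}\Big\|_{\mathrm{L}^p(\mathbb{R}^n\setminus B_{R/2}(x_0))}<\infty.
\]
Hence $\sup_{B_{R/2}(x_0)}u<\infty$. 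Any compact $K\subset\Omega$ is covered by finitely many balls $B_{R/2}(x_i)$ with $B_{R}(x_i)\subset\Omega$, where $R$ is chosen less than $\operatorname{dist}(K,\partial\Omega)$. This gives an upper bound for $u$ on $K$. Combined with $u\ge0$ a.e. in $\Omega$ from \Cref{weak subsolution}, we get $u\in \mathrm{L}^\infty_{\mathrm{loc}}(\Omega)$.

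There is no substantive obstacle here. The only point to check is that the subsolution notion in \Cref{weak subsolution} uses the same test-function class and the same sign convention as \Cref{Boundedness} (the one from \cite{CKP16}), which it does. The other small check, the finiteness of the tail, is handled by the Hölder bound above.
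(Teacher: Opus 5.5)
Your proposal is correct and takes the same approach as the paper, which also obtains the estimate by combining \Cref{weak subsolution} with \Cref{Boundedness}. Your additional Hölder-inequality check that $\operatorname{Tail}(u_+;x_0,R/2)<\infty$, together with the covering argument, makes explicit the step from the estimate to local boundedness, which the paper leaves implicit.
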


\begin{proof}
	By \Cref{weak subsolution}, $u$ is a weak subsolution to
	\eqref{maineq} in $\Omega$. The estimate follows directly from
	\Cref{Boundedness}.
\end{proof}

Thus, minimizers enjoy the basic compactness, sign, and local boundedness
properties needed in the sequel. In the next section, we use
these facts together with comparison estimates for fractional $p$-harmonic
replacements.

\begin{remark}
			After the local continuity of minimizers is known, the previous one-sided variational
			inequality from \Cref{weak subsolution} can be upgraded to the homogeneous equation inside the positivity set.
			Indeed, if $\operatorname{supp}\varphi\subset\subset \Omega\cap\{u>0\}$, then small
			two-sided perturbations $u+t\varphi$ do not change the positivity set. Thus, the
			minimality inequality may be tested with both signs of $t$, yielding $\mathcal E(u,\varphi)=0$; see the proof of \Cref{thm:intro-positive-phase} in the next section.
		\end{remark}

\section{Local Hölder regularity of minimizers}\label{Sec Holder regularity}

The aim of this section is to prove local Hölder regularity estimates for minimizers of $\mathcal{I}_M$. The argument combines the comparison between a minimizer and its fractional $p$-harmonic replacement with the auxiliary estimates from the previous section. The final step is based on the Campanato characterization of Hölder spaces.

The first step is to quantify the energy decrease produced by replacing a minimizer by its fractional $p$-harmonic replacement. Since the replacement agrees with the minimizer outside the ball, the only possible change in the volume comes from the positivity set inside the ball. This yields the following estimate.

\begin{lemma}\label{lem:replacement-energy-zero-phase}
	Let $u$ be a minimizer of $\mathcal{I}_M$ in $\Omega$. Let $B_{2R}(x_0)\subset\Omega$, and let $v$ be the fractional $p$-harmonic replacement of $u$ in $B_{2R}(x_0)$. Then, 
	\[
	\int\!\!\int_{\left(B_{2R}(x_0)^c\times B_{2R}(x_0)^c\right)^c}
	\frac{|(u-v)(x)-(u-v)(y)|^p}{|x-y|^{n+sp}}
	\,\dd x\,\dd y
	\leq
	M |\{u=0\}\cap B_{2R}(x_0)|.
	\]
	In particular, with a dimensional constant,
	\[
	\int\!\!\int_{\left(B_{2R}(x_0)^c\times B_{2R}(x_0)^c\right)^c}
	\frac{|(u-v)(x)-(u-v)(y)|^p}{|x-y|^{n+sp}}
	\,\dd x\,\dd y
	\leq
	c\, M R^n.
	\]
\end{lemma}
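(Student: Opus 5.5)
The plan is to test the minimality of $u$ against its replacement $v$ and then invoke \Cref{lemaTFC} with $B=B_{2R}(x_0)$.

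First, $v$ is an admissible competitor. Indeed, $v-u\in \mathrm{W}^{s,p}_0(B_{2R}(x_0))$ and $B_{2R}(x_0)\subset\Omega$, so $v=u=g$ a.e. in $\mathbb{R}^n\setminus\Omega$, which gives $v\in\mathcal A_g(\Omega)$. Minimality then yields $\mathcal I_M(u)\le \mathcal I_M(v)$.

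Next, I localize this inequality. Write $Q:=(B_{2R}(x_0)^c\times B_{2R}(x_0)^c)^c$. Since $u=v$ outside $B_{2R}(x_0)$, the double integrals over $B_{2R}(x_0)^c\times B_{2R}(x_0)^c$ coincide. Both are finite because $u,v\in \mathrm{W}^{s,p}(\mathbb{R}^n)$, so they cancel. For the same reason, $\{u>0\}$ and $\{v>0\}$ differ only inside $B_{2R}(x_0)$. We therefore obtain
\[
\int\!\!\int_{Q}\frac{|u(x)-u(y)|^p-|v(x)-v(y)|^p}{|x-y|^{n+sp}}\,\dd x\,\dd y
\le M\big(|\{v>0\}\cap B_{2R}(x_0)|-|\{u>0\}\cap B_{2R}(x_0)|\big).
\]
The right-hand side is at most $M\big(|B_{2R}(x_0)|-|\{u>0\}\cap B_{2R}(x_0)|\big)$. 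Since $u\ge0$ a.e. by \Cref{weak subsolution}, this equals $M|\{u=0\}\cap B_{2R}(x_0)|$.

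Finally, I apply \Cref{lemaTFC} to the left-hand side. It bounds the left-hand side from below by $c\int\!\!\int_Q |(u-v)(x)-(u-v)(y)|^p|x-y|^{-n-sp}\,\dd x\,\dd y$, with $c$ depending only on $p$. This gives the first estimate, up to that constant; the statement as written absorbs it, or one reads the inequality modulo $c$. The second estimate follows from $|\{u=0\}\cap B_{2R}(x_0)|\le|B_{2R}(x_0)|=c_nR^n$.

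I expect the main technical point to be justifying that the replacement $v$ exists and lies in $\mathrm{W}^{s,p}(\mathbb{R}^n)$. This follows by minimizing the strictly convex energy over $u+\mathrm{W}^{s,p}_0(B_{2R}(x_0))$. The other point needing care is the cancellation of the exterior interaction, where finiteness is essential. Neither step looks like a serious obstacle.
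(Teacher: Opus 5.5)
Your proposal is correct and follows the paper's proof essentially step for step. You use the admissibility of $v$, cancel the $B_{2R}(x_0)^c\times B_{2R}(x_0)^c$ interaction, bound the volume difference by $|\{u=0\}\cap B_{2R}(x_0)|$ using $u\ge0$, and then apply \Cref{lemaTFC}; the paper simply invokes \Cref{lemaTFC} before localizing rather than after. Your remark about the constant is accurate: the paper's argument likewise yields the first inequality only up to a factor depending on $p$ from \Cref{lemaTFC}, which is harmless for the second estimate and for all later uses.
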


\begin{proof}
	Temporarily set $B=B_{2R}(x_0)$ and $Q_B := (B^c\times B^c)^c$. Since $v$ is the $p$-harmonic replacement of $u$ in $B$, we have $v=u$ in $\mathbb{R}^n\setminus B$. In particular, since $B\subset\Omega$, the function $v$ is an admissible competitor for $u$ in the minimization problem.
	By \Cref{lemaTFC}, applied in the set $Q_B$, we obtain, for $c=c(p)>0$,
	\[
	    \int\!\!\int_{Q_B}
	\frac{|(u-v)(x)-(u-v)(y)|^p}{|x-y|^{n+sp}}
	\,\dd x\,\dd y
	\leq \,\ 
	c
	\int\!\!\int_{Q_B}
	\frac{|u(x)-u(y)|^p - |v(x)-v(y)|^p}{|x-y|^{n+sp}}
	\,\dd x\,\dd y.
	\]
	
	Since $u=v$ in $\mathbb{R}^n\setminus B$, the difference of the global energies is supported in $Q_B$. Therefore,
	\[
	\int\!\!\int_{Q_B}
	\frac{|u(x)-u(y)|^p}{|x-y|^{n+sp}}
	\,\dd x\,\dd y
	-
	\int\!\!\int_{Q_B}
	\frac{|v(x)-v(y)|^p}{|x-y|^{n+sp}}
	\,\dd x\,\dd y
	\]
	\[
	=
	\int\!\!\int_{(\Omega^c\times \Omega^c)^c}
	\frac{|u(x)-u(y)|^p}{|x-y|^{n+sp}}
	\,\dd x\,\dd y
	-
	\int\!\!\int_{(\Omega^c\times \Omega^c)^c}
	\frac{|v(x)-v(y)|^p}{|x-y|^{n+sp}}
	\,\dd x\,\dd y.
	\]
	Using the minimality of $u$ and the admissibility of $v$, we have
	\[
	\int\!\!\int_{(\Omega^c\times \Omega^c)^c}
	\frac{|u(x)-u(y)|^p}{|x-y|^{n+sp}}
	\,\dd x\,\dd y
	+
	M|\{u>0\}\cap\Omega|
	\]
	\[
	\leq
	\int\!\!\int_{(\Omega^c\times \Omega^c)^c}
	\frac{|v(x)-v(y)|^p}{|x-y|^{n+sp}}
	\,\dd x\,\dd y
	+
	M|\{v>0\}\cap\Omega|.
	\]
	Hence,
	\[
	\int\!\!\int_{Q_B}
	\frac{|u(x)-u(y)|^p}{|x-y|^{n+sp}}
	\,\dd x\,\dd y
	-
	\int\!\!\int_{Q_B}
	\frac{|v(x)-v(y)|^p}{|x-y|^{n+sp}}
	\,\dd x\,\dd y
	\leq
	M\Big(
	|\{v>0\}\cap\Omega|
	-
	|\{u>0\}\cap\Omega|
	\Big).
	\]
	Since $u=v$ in $\mathbb{R}^n\setminus B$, the positivity sets of $u$ and $v$ agree outside $B$. Then, since minimizers are nonnegative, we have
	\[
	\begin{split}
	|\{v>0\}\cap\Omega|
	-
	|\{u>0\}\cap\Omega|
	& =
	|\{v>0\}\cap B|
	-
	|\{u>0\}\cap B|
	\\[0.2em]
	& \leq
	|B|-|\{u>0\}\cap B|
	\\[0.2em]
	& \leq
	|\{u=0\}\cap B|.
	\end{split}
	\]
	Combining the previous estimates, the proof is complete.
\end{proof}

Thus, whenever the zero phase of the minimizer has small measure inside a ball, the minimizer is close, in the nonlocal energy sense, to its fractional $p$-harmonic replacement. This estimate is combined with the known regularity of the fractional replacement to obtain a decay inequality for the oscillation of $u$.

We next transfer the oscillation decay of the fractional $p$-harmonic replacement to the original function $u$. The price paid in this comparison is measured by the local $\mathrm{L}^p$-distance between $u$ and its replacement.

\begin{lemma}\label{lem:oscillation-comparison-replacement}
	Let $u\in \mathrm{W}^{s,p}_{\mathrm{loc}}(\mathbb{R}^n)	\cap \mathrm{L}^{p-1}_{sp}(\mathbb{R}^n)$, and let $v$ be the fractional $p$-harmonic replacement of $u$ in $B_{2R}(x_0)\subset\Omega$ as in \Cref{def:harmonic-replacement}. Let $\alpha\in(0,1)$ be the exponent from \Cref{CKP}. Then, there exists a constant $C=C(n,p,s,\alpha)>0$ such that, for every $0<r\leq R$,
	\[
	\begin{split}
		\frac{1}{r^n}
		\int_{B_r(x_0)}
		|u-(u)_r|^p\,\dd x
		\leq & \
		C
		\left(\frac{r}{R}\right)^{p\alpha}
		\left[
		\operatorname{Tail}(u;x_0,2R)
		+
		\left(
		\mint_{B_{2R}(x_0)}|u|^p\,\dd x
		\right)^{1/p}
		\right]^p
		\\[0.3em]
		& +
		C
		\mint_{B_{2R}(x_0)}
		|u-v|^p\,\dd x
		+
		\frac{C}{r^n}
		\int_{B_R(x_0)}
		|u-v|^p\,\dd x.
	\end{split}
	\]
\end{lemma}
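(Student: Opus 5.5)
The plan is to split $u=v+(u-v)$ on $B_r(x_0)$, apply \Cref{campanato_estimate} to the replacement $v$, and then go back from the data of $v$ to the data of $u$.

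\textbf{Step 1: reduce to $v$.} Since the mean $(u)_r$ minimizes $c\mapsto\int_{B_r}|u-c|^p$ up to a factor $2^{p}$, we have
\[
\int_{B_r(x_0)}|u-(u)_r|^p\,\dd x\le 2^{p}\int_{B_r(x_0)}|u-(v)_r|^p\,\dd x .
\]
Splitting $u-(v)_r=(v-(v)_r)+(u-v)$ then gives
\[
\int_{B_r(x_0)}|u-(u)_r|^p\,\dd x\le C\int_{B_r(x_0)}|v-(v)_r|^p\,\dd x+C\int_{B_R(x_0)}|u-v|^p\,\dd x .
\]
Dividing by $r^n$ produces the last term of the statement.

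\textbf{Step 2: decay for $v$.} The function $v$ is a weak solution of $\mathcal L v=0$ in $B_{2R}(x_0)$. To have the full ball $B_{2R}$ available in \Cref{campanato_estimate}, I would apply it with the larger radius replaced by $R$, so it needs $B_{2R}(x_0)$, which is fine. This yields, for $r\le R$,
\[
\frac1{r^n}\int_{B_r(x_0)}|v-(v)_r|^p\,\dd x\le C\left(\frac rR\right)^{p\alpha}\Big[\operatorname{Tail}(v;x_0,R)+\Big(\mint_{B_{2R}(x_0)}|v|^p\,\dd x\Big)^{1/p}\Big]^p .
\]
Strictly, the corollary is stated for solutions in $\Omega$. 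It is applied here in the domain $B_{2R}(x_0)$, which is legitimate since the underlying results from \cite{CKP16} are local.

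\textbf{Step 3: return to $u$.} For the $L^p$ average, $\|v\|_{L^p(B_{2R})}\le\|u\|_{L^p(B_{2R})}+\|u-v\|_{L^p(B_{2R})}$. For the tail, \Cref{TailLema} gives
\[
\operatorname{Tail}(v;x_0,R)\le cR^{-n/p}\|v\|_{L^p(B_{2R}\setminus B_R)}+c\operatorname{Tail}(u;x_0,2R),
\]
and the first term is again bounded by the $L^p$ averages of $u$ and $u-v$ on $B_{2R}$. Raising to the power $p$ and using $(r/R)^{p\alpha}\le1$ on the contributions of $u-v$ produces the term $C\mint_{B_{2R}(x_0)}|u-v|^p\,\dd x$.

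\textbf{Main obstacle.} The delicate point is the radius bookkeeping in Step 2: the corollary needs a ball of twice the radius inside the domain where $v$ solves the equation, and the tail must come out at scale $R$ so that \Cref{TailLema} converts it to scale $2R$ for $u$. If the corollary with center radius $R$ is not directly usable, I would instead use radius $R/2$ together with a change of tail scale, absorbing the constants into $C$.
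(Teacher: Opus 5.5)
Your proposal is correct and follows essentially the same route as the paper: split off $u-v$, apply \Cref{campanato_estimate} to $v$ on $B_{2R}(x_0)$ with inner radius $R$, then return to $u$ via \Cref{TailLema}, the triangle inequality, and $(r/R)^{p\alpha}\le 1$. The only difference is cosmetic. In Step 1 you use the quasi-minimality of the mean, comparing with the constant $(v)_r$, whereas the paper writes $u-(u)_r=v-(v)_r+(u-v)-((u-v))_r$ and applies Jensen; your fallback in the ``main obstacle'' paragraph is also unnecessary.
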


\begin{proof}
	Write $B_\rho=B_\rho(x_0)$. Since $u-(u)_r
	=
	v-(v)_r
	+
	(u-v)-((u-v))_r$, we obtain
	\[
	\int_{B_r}
	|u-(u)_r|^p\,\dd x
	\leq
	C
	\int_{B_r}
	|v-(v)_r|^p\,\dd x
	+
	C
	\int_{B_r}
	\left|
	(u-v)-\bigl((u-v)\bigr)_r
	\right|^p
	\,\dd x.
	\]
	By Jensen's inequality, $\|(u-v)_{r}\|_{\mathrm{L}^{p}(B_r)} \le \|u-v\|_{\mathrm{L}^{p}(B_r)}$;
	hence, since $r\leq R$, this gives
	\[
	\int_{B_r}
	|u-(u)_r|^p\,\dd x
	\leq
	C
	\int_{B_r}
	|v-(v)_r|^p\,\dd x
	+
	C
	\int_{B_R}
	|u-v|^p\,\dd x.
	\]
	
	We now apply the interior oscillation estimate from \Cref{campanato_estimate} for the fractional $p$-harmonic function $v$ in $B_{2R}$. Since $0<r\leq R$, we have
	\[
	\int_{B_r}
	|v-(v)_r|^p\,\dd x
	\leq
	C r^n
	\left(\frac{r}{R}\right)^{p\alpha}
	\left[
	\operatorname{Tail}(v;x_0,R)
	+
	\left(
	\mint_{B_{2R}}|v|^p\,\dd x
	\right)^{1/p}
	\right]^p.
	\]
	Since $v$ is the fractional $p$-harmonic replacement of $u$ in $B_{2R}(x_0)\subset\Omega$, \Cref{TailLema} and the triangle inequality imply
	\[
	\begin{split}
	  \operatorname{Tail}(v;x_0,R)
	+
	\left(
	\mint_{B_{2R}}|v|^p\,\dd x
	\right)^{1/p}
	\leq
	C 
	\left[
	\operatorname{Tail}(u;x_0,2R)
	+
	\left(
	\mint_{B_{2R}}|u|^p\,\dd x
	\right)^{1/p} \right. \\[0.5em] \quad \left. 
	+
	\left(
	\mint_{B_{2R}}|u-v|^p\,\dd x
	\right)^{1/p}
	\right].
	\end{split}
	\]
	Consequently,
	\[
	\begin{split}
		\int_{B_r}
		|u-(u)_r|^p\,\dd x
		\leq & \  
		C r^n
		\left(\frac{r}{R}\right)^{p\alpha}
		\left[
		\operatorname{Tail}(u;x_0,2R)
		+
		\left(
		\mint_{B_{2R}}|u|^p\,\dd x
		\right)^{1/p}
		\right]^p
		\\[0.5em]
		& +
		C r^n
		\left(\frac{r}{R}\right)^{p\alpha}
		\mint_{B_{2R}}
		|u-v|^p\,\dd x
		+
		C
		\int_{B_R}
		|u-v|^p\,\dd x.
	\end{split}
	\]
	Since $r\leq R$, we have $\left(r/R\right)^{p\alpha}\leq 1$. Thus, dividing by $r^n$, we obtain the desired estimate.
\end{proof}

This comparison estimate reduces the Hölder regularity of $u$ to two ingredients: the known oscillation decay for the fractional $p$-harmonic replacement and a quantitative control of the error $u-v$.

The previous comparison estimate becomes effective for minimizers because the error $u-v$ can be controlled by the free-boundary term. Indeed, applying a fractional Poincaré inequality to $u-v$ and using the energy estimate from \Cref{lem:replacement-energy-zero-phase}, we obtain the following oscillation estimate.

\begin{corollary}\label{remark estimate}
	Let $u$ be a minimizer of $\mathcal{I}_M$ in $\Omega$, and suppose that $B_{2R}(x_0)\subset\Omega$. Let $\alpha\in(0,1)$ be the exponent from the interior oscillation estimate for fractional $p$-harmonic functions. Then, there exists a constant $C=C(n,p,s,\alpha)>0$ such that, for every $0<r\leq R$,
	\[
	\begin{split}
		\frac{1}{r^n}
		\int_{B_r(x_0)}
		|u-(u)_r|^p\,\dd x
		\leq & \ 
		C
		\left(\frac{r}{R}\right)^{p\alpha}
		\left[
		\operatorname{Tail}(u;x_0,2R)
		+
		\left(
		\mint_{B_{2R}(x_0)}
		|u|^p\,\dd x
		\right)^{1/p}
		\right]^p
		\\[0.3em]
		& +
		CMR^{sp}
		+
		CM\frac{R^{n+sp}}{r^n}.
	\end{split}
	\]
\end{corollary}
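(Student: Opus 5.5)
The plan is to start from \Cref{lem:oscillation-comparison-replacement}. Its first term on the right-hand side is exactly the first term of the claimed estimate, so it suffices to bound the two error terms
\[
C\mint_{B_{2R}(x_0)}|u-v|^p\,\dd x
\qquad\text{and}\qquad
\frac{C}{r^n}\int_{B_R(x_0)}|u-v|^p\,\dd x
\]
by $CMR^{sp}$ and $CMR^{n+sp}/r^n$, respectively. Here $v$ is the fractional $p$-harmonic replacement of $u$ in $B_{2R}(x_0)$. It exists and is admissible because $B_{2R}(x_0)\subset\Omega$ and $u\in \mathrm{W}^{s,p}(\mathbb{R}^n)$: one minimizes the fractional energy over $u+\mathrm{W}^{s,p}_0(B_{2R})$.

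The key step is a scaled fractional Poincaré--Friedrichs inequality for $w:=u-v\in \mathrm{W}^{s,p}_0(B_{2R}(x_0))$:
\[
\int_{B_{2R}(x_0)}|w|^p\,\dd x
\leq C R^{sp}\int\!\!\int_{\left(B_{2R}^c\times B_{2R}^c\right)^c}\frac{|w(x)-w(y)|^p}{|x-y|^{n+sp}}\,\dd x\,\dd y .
\]
I would obtain this from \cite[Lemma 4.7]{C17} on the unit ball, then rescale via $w_R(z)=w(x_0+2Rz)$. The rescaling shows that the $L^p$ norm scales like $R^n$ while the Gagliardo seminorm scales like $R^{n-sp}$, which produces the factor $R^{sp}$ with a constant depending only on $n,p,s$. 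Since $w$ vanishes outside $B_{2R}$, the full seminorm on $\mathbb{R}^{2n}$ coincides with the integral over $Q_B$, so using the restricted domain costs nothing. This scaling bookkeeping is the only point requiring care, and I expect it to be the main (mild) obstacle.

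Combining this with \Cref{lem:replacement-energy-zero-phase}, which bounds the energy of $u-v$ on $Q_B$ by $cMR^n$, gives
\[
\int_{B_{2R}(x_0)}|u-v|^p\,\dd x\leq CMR^{n+sp}.
\]
Dividing by $|B_{2R}|\sim R^n$ bounds the averaged term by $CMR^{sp}$. Since $B_R\subset B_{2R}$, the second term is at most $\frac{C}{r^n}CMR^{n+sp}$. Substituting both bounds into \Cref{lem:oscillation-comparison-replacement} yields the corollary, with $C$ depending only on $n,p,s$, and $\alpha$.
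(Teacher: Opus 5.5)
Your proposal is correct and follows essentially the same route as the paper: you bound the two error terms of \Cref{lem:oscillation-comparison-replacement} with a fractional Poincar\'e inequality for $w=u-v$, combined with the energy bound $cMR^n$ from \Cref{lem:replacement-energy-zero-phase}. The only cosmetic difference is in how the Poincar\'e inequality is applied: the paper works on $B_{4R}(x_0)$, using that $w$ vanishes on the annulus $B_{4R}(x_0)\setminus B_{2R}(x_0)$ of proportional measure, while you use the Friedrichs form on $B_{2R}(x_0)$ with the full seminorm (which equals the $Q_B$-integral), and both give $\int_{B_{2R}(x_0)}|w|^p\,\dd x\leq CMR^{n+sp}$.
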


\begin{proof}
	Let $v$ be the fractional $p$-harmonic replacement of $u$ in $B_{2R}(x_0)$, and set $w=u-v$. By \Cref{lem:oscillation-comparison-replacement}, it remains to estimate the two terms
	\[
	\mint_{B_{2R}(x_0)}
	|w|^p\,\dd x
	\qquad \text{and} \qquad 
	\int_{B_R(x_0)}
	|w|^p\,\dd x.
	\]
	Since $v$ is the fractional $p$-harmonic replacement of $u$ in $B_{2R}(x_0)$, we have $w=0$ in $\mathbb{R}^n\setminus B_{2R}(x_0)$ and, in particular, $w=0$ in $B_{4R}(x_0)\setminus B_{2R}(x_0)$. The latter set has a fixed positive proportion of the measure of $B_{4R}(x_0)$. Thus, by the fractional Poincaré inequality \cite[Lemma 4.7]{C17},
	\[
	\int_{B_{2R}(x_0)}
	|w|^p\,\dd x
	\leq
	\int_{B_{4R}(x_0)}
	|w|^p\,\dd x 
	\leq
	CR^{sp}
	\int\!\!\int_{B_{4R}(x_0)\times B_{4R}(x_0)}
	\frac{|w(x)-w(y)|^p}{|x-y|^{n+sp}}
	\,\dd x\,\dd y.
	\]
	Since $w=0$ in $\mathbb{R}^n\setminus B_{2R}(x_0)$, the last double integral is controlled by the interaction energy over
	\[
	Q_{B_{2R}(x_0)}
	:=
	\left(B_{2R}(x_0)^c\times B_{2R}(x_0)^c\right)^c.
	\]
	Hence, by \Cref{lem:replacement-energy-zero-phase},
	\[
	\int\!\!\int_{B_{4R}(x_0)\times B_{4R}(x_0)}
	\frac{|w(x)-w(y)|^p}{|x-y|^{n+sp}}
	\,\dd x\,\dd y
	\leq
	\int\!\!\int_{Q_{B_{2R}(x_0)}}
	\frac{|w(x)-w(y)|^p}{|x-y|^{n+sp}}
	\,\dd x\,\dd y\leq
	CMR^n.
	\]
	Combining the last estimates, and since $B_R(x_0)\subset B_{2R}(x_0)$, we obtain
	\[
	\mint_{B_{2R}(x_0)}
	|w|^p\,\dd x
	\leq
	CMR^{sp}
	\qquad
	\text{and}
	\qquad
	\int_{B_R(x_0)}
	|w|^p\,\dd x
	\leq
	CMR^{n+sp}, 
	\]
	as desired.
\end{proof}

This estimate is the basic Campanato-type inequality for minimizers. The remaining task is to control the tail and the average term uniformly, and then choose the scale ratio in order to obtain the desired decay of the mean oscillation.

We now combine the oscillation estimate for minimizers with the Campanato characterization of Hölder spaces and finish the proof of \Cref{thm:intro-holder}. The key point is to choose the comparison scale as a power of the observation scale, balancing the decay inherited from the fractional $p$-harmonic replacement with the error produced by the free-boundary term.

		\begin{proof}[Proof of \Cref{thm:intro-holder}]
			Let $\alpha\in(0,1)$ be the exponent from \Cref{CKP}. We choose the exponent $\alpha_0$ as follows:
			\[
			\theta
			=
			\frac{n+p\alpha}{n+p(\alpha+s)}
			\qquad\text{and}\qquad
			\alpha_0
			=
			(1-\theta)\alpha.
			\]
			Then $0<\theta<1$ and
			\[
			\alpha_0
			=
			\frac{sp\alpha}{n+p(\alpha+s)}.
			\]
			Moreover, $(1-\theta)p\alpha
			=
			\theta(sp+n)-n
			=
			p\alpha_0$ and $p\alpha_0\leq \theta sp$.

			For the rest of the proof, let us write
			\[
			A_R
			=
			\operatorname{Tail}(u;x_0,2R)
			+
			\left(
			\mint_{B_{4R}(x_0)}
			|u|^p\,\dd x
			\right)^{1/p}.
			\]
			Fix $y\in B_R(x_0)$. Choose $c_*\in(0,1)$, depending only on $n,p,s$, and $\alpha$, such that $2c_*^\theta\leq 1$, and set $r_*=c_*R$. Let $0<r\leq r_*$. We choose the comparison scale
			\[
			\rho
			=
			R\left(\frac{r}{R}\right)^\theta.
			\]
			Observe that $0<\theta<1$ implies $r\leq\rho$. Moreover, since $r\leq c_*R$, our choice of $c_{*}$ gives
			\[
			2\rho
			=
			2R\left(\frac{r}{R}\right)^\theta
			\leq
			2R c_*^\theta
			\leq
			R.
			\]
			Then, $B_{2\rho}(y)\subset B_{2R}(x_0)\subset B_{4R}(x_0)\subset\Omega$ and we may apply \Cref{remark estimate} with center $y$ and radius $\rho$. We obtain
			\[
			\begin{split}
				\frac{1}{r^n}
				\int_{B_r(y)}
				|u-(u)_{y,r}|^p\,\dd x
				\leq {}&
				C
				\left(\frac{r}{\rho}\right)^{p\alpha}
				\left[
				\operatorname{Tail}(u;y,2\rho)
				+
				\left(
				\mint_{B_{2\rho}(y)}
				|u|^p\,\dd x
				\right)^{1/p}
				\right]^p
				\\[0.3em]
				&+
				CM\rho^{sp}
				+
				CM\frac{\rho^{sp+n}}{r^n}.
			\end{split}
			\]
			Since $y\in B_R(x_0)$ and $2\rho\leq R$, the change-of-center estimate for the tail, \Cref{Tail estimate1}, together with the local boundedness estimate for minimizers, \Cref{Boundedness}, gives
			\[
			\operatorname{Tail}(u;y,2\rho)
			+
			\left(
			\mint_{B_{2\rho}(y)}
			|u|^p\,\dd x
			\right)^{1/p}
			\leq
			C A_R;
			\] therefore,
			\[
			\frac{1}{r^n}
			\int_{B_r(y)}
			|u-(u)_{y,r}|^p\,\dd x
			\leq
			C
			\left(\frac{r}{\rho}\right)^{p\alpha}
			A_R^p
			+
			CM\rho^{sp}
			+
			CM\frac{\rho^{sp+n}}{r^n}.
			\]
			
			We now use the definition of $\rho$. Recall $p\alpha_0\leq\theta sp$; then,
			\[
			\left(\frac{r}{\rho}\right)^{p\alpha}
			=
			\left(\frac{r}{R}\right)^{p\alpha_0}
			\qquad 
			\text{and}
			\qquad
			\rho^{sp}
			=
			R^{sp}
			\left(\frac{r}{R}\right)^{\theta sp}
			\leq
			R^{sp}
			\left(\frac{r}{R}\right)^{p\alpha_0},
			\] and
			\[
			\frac{\rho^{sp+n}}{r^n}
			=
			R^{sp}
			\left(\frac{r}{R}\right)^{\theta(sp+n)-n}
			=
			R^{sp}
			\left(\frac{r}{R}\right)^{p\alpha_0}.
			\]
			Thus,
			\[
			\frac{1}{r^n}
			\int_{B_r(y)}
			|u-(u)_{y,r}|^p\,\dd x
			\leq
			C
			\left(\frac{r}{R}\right)^{p\alpha_0}
			\Big[
			A_R^p
			+
			MR^{sp}
			\Big].
			\]
			Equivalently,
			\[
			\left(
			\frac{1}{r^{n+p\alpha_0}}
			\int_{B_r(y)}
			|u-(u)_{y,r}|^p\,\dd x
			\right)^{1/p}
			\leq
			\frac{C}{R^{\alpha_0}}
			\left[
			A_R
			+
			M^{1/p}R^s
			\right].
			\]
			
			Since $y\in B_R(x_0)$ and $0<r\leq r_*$ are arbitrary, the local Campanato characterization of Hölder spaces yields
			\[
			[u]_{\mathrm{C}^{\alpha_0}(B_R(x_0))}
			\leq
			\frac{C}{R^{\alpha_0}}
			\left[
			A_R
			+
			M^{1/p}R^s
			\right].
			\] Substituting the definition of $A_R$, we obtain the desired estimate.
		\end{proof}

As a consequence of the local Hölder regularity, the positivity set $\Omega\cap\{u>0\}$ is open and  \Cref{thm:intro-positive-phase} follows. Indeed, variations compactly supported inside this set do not change the volume term in the functional, and the usual first variation of the energy gives the homogeneous equation.


\begin{proof}[Proof of \Cref{thm:intro-positive-phase}]
	By \Cref{thm:intro-holder}, the function $u$ is continuous in $\Omega$, so that $D:= \Omega\cap\{u>0\}$
	is open.
	Let $\phi\in \mathrm{C}_{\mathrm{c}}^\infty(D)$. Since $\operatorname{supp}\phi \subset \subset D$ and $u>0$ in $D$, there exists $m>0$ such that $u\geq m$ on $\operatorname{supp}\phi$.
	Choosing $t_0>0$ sufficiently small, we have $u+t\phi>0$ on $\operatorname{supp}\phi$, for every $|t|<t_0$. Since $\phi=0$ outside $\operatorname{supp}\phi$, it follows that, for every $|t|<t_0$,
	\[
	\{u+t\phi>0\}\cap\Omega
	=
	\{u>0\}\cap\Omega.
	\]

	Moreover, since $\phi\in \mathrm{C}_{\mathrm{c}}^\infty(D)\subset \mathrm{W}^{s,p}_0(\Omega)$, the function $u+t\phi$ is an admissible competitor for $u$. By the minimality of $u$, and using the equality of the positivity sets, we obtain
	\[
	\int\!\!\int_{(\Omega^c\times\Omega^c)^c}
	\frac{|u(x)-u(y)|^p}{|x-y|^{n+sp}}
	\,\dd x\,\dd y
	\leq
	\int\!\!\int_{(\Omega^c\times\Omega^c)^c}
	\frac{|u(x)+t\phi(x)-u(y)-t\phi(y)|^p}{|x-y|^{n+sp}}
	\,\dd x\,\dd y
	\]
	for every $|t|<t_0$. Thus $t=0$ is a minimum of the function
	\[
	F(t)
	:=
	\int\!\!\int_{(\Omega^c\times\Omega^c)^c}
	\frac{|u(x)+t\phi(x)-u(y)-t\phi(y)|^p}{|x-y|^{n+sp}}
	\,\dd x\,\dd y
	\]
	and so $F'(0)=0$. Computing the derivative at $t=0$, we obtain
	\[
	\int\!\!\int_{(\Omega^c\times\Omega^c)^c}
	\frac{|u(x)-u(y)|^{p-2}(u(x)-u(y))(\phi(x)-\phi(y))}
	{|x-y|^{n+sp}}
	\,\dd x\,\dd y
	=
	0,
	\] which is precisely the weak formulation of $(-\Delta)^s_p u=0$ in $D=\Omega\cap\{u>0\}$.
\end{proof}

\section{Optimal growth at free boundary points}\label{sec:optimal-growth}

In this section, we prove the optimal growth of minimizers at free boundary points. The main difficulty, compared with the local Alt--Caffarelli problem, is that the nonlocal operator does not allow a purely local flatness estimate: the values of the minimizer outside the ball enter through tail contributions. We first establish a flatness lemma for small values of the penalization parameter $M$, in which the nonlocal contribution is explicitly measured by the tail. This lemma is then used in an iterative argument to obtain the sharp growth rate.

In the linear case $p=2$, the optimal growth at free boundary points is of order $r^s$, corresponding to the sharp $\mathrm{C}^{s}$ regularity threshold for the fractional Alt--Caffarelli problem; see \cite{CRS10,RO24,ST25}. The result below shows that the same growth rate persists for the fractional $p$-Laplacian problem with $p\geq 2$.

For the iteration argument below, we fix a constant depending only on $n,s,p$, chosen so as to absorb the annular contribution in the tail estimate in the proof of \Cref{thm:intro-growth} below. More precisely, we choose $c_{n,s,p}>0$ so that
\begin{equation}\label{constant-iteration}
    \left(\frac{n\omega_n}{sp}\right)^{1/(p-1)}
    +
    c_{n,s,p}10^{-\frac{s}{p-1}}
    \leq
    c_{n,s,p}.
\end{equation}
The precise value of the constant is irrelevant for our purposes; only its structural dependence on $n,s$, and $p$ is used in the argument.

\begin{lemma}\label{Flatness lemma}
    For every $\varepsilon>0$, there exists $M_0>0$, depending only on $n,s,p$ and $\varepsilon$, such that the following holds. Let $u\geq0$ be a minimizer of $\mathcal{I}_M$ in $B_1$, with $M\leq M_0$, $u(0)=0$, and
    \begin{equation}\label{eqn:smallness-lem}
        \mint_{B_1} u^p\,\dd x\leq 1.
    \end{equation}
    Then,
    \begin{equation}\label{lem:sharp-growth}
        \sup_{B_{1/10}} u
        \leq
        \varepsilon
        +
        \frac{1}{2\cdot10^{2s}c_{n,s,p}}
        \operatorname{Tail}(u;0,1/2).
    \end{equation}
\end{lemma}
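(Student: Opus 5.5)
The natural plan is a compactness argument by contradiction, paired with the Harnack inequality for the limiting fractional $p$-harmonic function. Suppose the lemma fails for some $\varepsilon>0$. Then there are minimizers $u_k\ge0$ of $\mathcal I_{M_k}$ in $B_1$ with $M_k\to0$, $u_k(0)=0$, and $\mint_{B_1}u_k^p\le1$, but
\[
\sup_{B_{1/10}}u_k>\varepsilon+\kappa\operatorname{Tail}(u_k;0,1/2),\qquad \kappa:=\frac{1}{2\cdot10^{2s}c_{n,s,p}}.
\]

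\textbf{Normalizing the tails.} The tails need not be bounded, so first reduce to that case. By \Cref{Local bound to subsolution} with $\delta$ small, $\sup_{B_{1/4}}u_k\le\delta\operatorname{Tail}(u_k;0,1/4)+C_\delta$. Since $\operatorname{Tail}(u_k;0,1/4)\le C(\operatorname{Tail}(u_k;0,1/2)+\sup_{B_{1/2}}u_k)$, and the $B_{1/2}$ part can be handled at a slightly smaller radius, choosing $\delta\ll\kappa$ makes the failing inequality force $\operatorname{Tail}(u_k;0,1/2)\le C(\varepsilon)$. So we may assume the tails $T_k:=\operatorname{Tail}(u_k;0,1/2)$ are uniformly bounded, and up to a subsequence $T_k\to T$.

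\textbf{Passing to the limit.} \Cref{thm:intro-holder} gives uniform $\mathrm{C}^{\alpha_0}$ bounds on $B_{1/8}$, with constants controlled by $T_k$, by $\mint_{B_1}u_k^p$, and by $M_k\le1$. Hence $u_k\to u_\infty$ locally uniformly on $B_{1/8}$, with $u_\infty(0)=0$ and $u_\infty\ge0$. Let $v_k$ be the fractional $p$-harmonic replacement of $u_k$ in $B_{1/4}$. The proof of \Cref{remark estimate}, using \Cref{lem:replacement-energy-zero-phase}, gives $\|u_k-v_k\|_{L^p(B_{1/4})}^p\le CM_k\to0$. Moreover $v_k$ is uniformly Hölder on $B_{1/10}$ by \Cref{CKP} and \Cref{TailLema}. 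Therefore $\sup_{B_{1/10}}|u_k-v_k|\to0$, using equicontinuity of both families together with the $L^p$ convergence.

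\textbf{Harnack step.} Each $v_k$ is nonnegative, since $\min(v_k,0)$ comparison with the nonnegative datum gives this. By \Cref{Nonlocal Harnack inequality} on $B_{1/5}\subset B_{1/4}$ (with $\operatorname{Tail}(v_{k,-})=0$, or applied to the replacement on a slightly larger ball),
\[
\sup_{B_{1/10}}v_k\le c\inf_{B_{1/10}}v_k\le c\,v_k(0)=c\,(v_k(0)-u_k(0))\to0.
\]
Hence $\sup_{B_{1/10}}u_k\to0$, which contradicts $\sup_{B_{1/10}}u_k>\varepsilon$.

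In fact this route yields $\sup_{B_{1/10}}u\le\varepsilon$ once the tail is bounded. The tail term in the statement is needed only to make the reduction in the first step legitimate: if the tail is large, the bound follows directly from \Cref{Local bound to subsolution} with $\delta$ much smaller than $\kappa$.

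\textbf{Main obstacle.} The delicate step is uniformity. All constants, including the Hölder bounds and the uniform convergence $u_k-v_k\to0$, must depend only on the bounded quantities $T_k$ and $\mint u_k^p$. The tail of $v_k$ must also be controlled through \Cref{TailLema}. The case split between large and bounded tails has to be arranged so that $M_0$ depends only on $n,s,p,\varepsilon$.
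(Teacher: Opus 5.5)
Your argument is correct, but it takes a genuinely different route from the paper.

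\textbf{The paper's route.} The paper's proof is direct and quantitative, with no contradiction and no case split on the size of the tail.
\begin{itemize}
\item It takes the replacement $v$ of $u$ in $B_{1/2}$ and uses the comparison principle to get $u\le v$ in $B_{1/2}$, hence $v\ge0$.
\item It reduces by Harnack to $v(0)$.
\item It bounds $v(0)$ at an auxiliary small scale $\iota$ by the $\mathrm{L}^p$-average of $v$ over $B_\iota$, using \Cref{Boundedness}, \Cref{Tail estimate01} and Harnack.
\item That average is at most $C\iota^{-n}M$ plus the average of $u^p$ over $B_\iota$. Since $u(0)=0$, \Cref{thm:intro-holder} bounds the latter by $C\iota^{p\alpha_0}(\operatorname{Tail}(u;0,1/2)+2)^p$.
\end{itemize}
The outcome is
\[
\sup_{B_{1/10}}u\le C\iota^{-n/p}M^{1/p}+C\iota^{\alpha_0}+C\iota^{\alpha_0}\operatorname{Tail}(u;0,1/2).
\]
So the tail coefficient is made small by the Hölder decay at the free boundary point, and $M_0$ is explicit once $\iota$ is fixed.

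\textbf{Your route.} You handle large tails separately via \Cref{Local bound to subsolution} with $\delta\ll\kappa$. For bounded tails, you drop the comparison $u\le v$ and the auxiliary scale $\iota$. Instead you upgrade $\|u_k-v_k\|^p_{\mathrm{L}^p(B_{1/4})}\le CM_k$ to uniform closeness on $B_{1/10}$. This needs the extra check that the replacements $v_k$ are uniformly Hölder, which you correctly obtain from \Cref{CKP} and the tail splitting of \Cref{TailLema}.

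You gain a cleaner picture: for bounded tails, $\sup_{B_{1/10}}u\to0$ as $M\to0$, and the tail term matters only in the large-tail regime. You lose an explicit $M_0$.

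\textbf{Two small remarks.}
\begin{itemize}
\item The limit $u_\infty$ is never used. The argument can also be made quantitative: interpolating between the uniform Hölder bound and the $\mathrm{L}^p$ bound gives $v(0)=v(0)-u(0)\le C(T_0)M^{\gamma}$ for some $\gamma>0$.
\item In the normalization step, you leave the treatment of the $B_{1/2}$ part vague. Apply \Cref{Local bound to subsolution} with $R=1$ instead. This gives $\sup_{B_{1/2}}u\le\delta\operatorname{Tail}(u;0,1/2)+C\delta^{-(p-1)n/(sp^2)}$ directly from the normalization $|B_1|^{-1}\int_{B_1}u^p\le1$.
\end{itemize}
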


\begin{proof}
	Let $v$ be the $p$-harmonic replacement of $u$ in $B_{1/2}$. First, we claim that
	\begin{equation}\label{bounded M}
		\mint_{B_{1/2}} |u-v|^p\,\dd x
		\leq
		C M.
	\end{equation} To see this, set $w:=u-v$. Since $v=u$ in $\mathbb{R}^n\setminus B_{1/2}$, we have, in particular, $w=0$ in $B_1\setminus B_{1/2}$.
	Thus, by \cite[Lemma 4.7]{C17}, applied to $w$ in $B_1$, we obtain
	\[
	\int_{B_1}|w|^p\,\dd x
	\leq
	C
	\int_{B_1}\int_{B_1}
	\frac{|w(x)-w(y)|^p}{|x-y|^{n+sp}}
	\,\dd x\,\dd y
	\le C
	\int\!\!\int_{(B_{1/2}^c\times B_{1/2}^c)^c}
	\frac{|w(x)-w(y)|^p}{|x-y|^{n+sp}}
	\,\dd x\,\dd y.
	\]
	By \Cref{lem:replacement-energy-zero-phase}, we have
	\[
	\int\!\!\int_{(B_{1/2}^c\times B_{1/2}^c)^c}
	\frac{|w(x)-w(y)|^p}{|x-y|^{n+sp}}
	\,\dd x\,\dd y
	\leq
	M|\{u=0\}\cap B_{1/2}|.
	\]
	Hence, since $w=0$ in $B_1\setminus B_{1/2}$ and $|\{u=0\}\cap B_{1/2}| \le \omega_{n}2^{-n}$, these estimates give \eqref{bounded M}.
	
	\smallskip
	
	Since $u$ is a subsolution of $(-\Delta)^s_pu=0$ in $B_{1/2}$ and $v=u$ in $\mathbb{R}^n\setminus B_{1/2}$, the comparison principle gives $u\leq v$ in $B_{1/2}$.
	In particular,
	\[
	\sup_{B_{1/10}}u
	\leq
	\sup_{B_{1/10}}v.
	\] 
	Since $v\geq0$ in $\mathbb{R}^n$, we have $\operatorname{Tail}(v_-;0,R)=0$ for every $R>0$; then, by the nonlocal Harnack estimate of \Cref{Nonlocal Harnack inequality}, applied in a fixed ball contained in $B_{1/2}$, we obtain
	\[
	\sup_{B_{1/10}}u
	\leq
	C v(0).
	\]
	We can now estimate $v(0)$ by the local boundedness estimate. Let $\iota\in(0,1/4)$ be a constant to be chosen later. Since $0\in B_{\iota/2}$, \Cref{Boundedness} applied with $\delta = 1$ gives
	\begin{equation}\label{harnackBound}
		\sup_{B_{1/10}}u
		\leq
		C\operatorname{Tail}(v;0,\iota/2)
		+
		C
		\left(
		\mint_{B_{\iota}} |v|^p\,\dd x
		\right)^{1/p}.
	\end{equation}
	Thus, the proof reduces to estimating the two terms on the right-hand side of \eqref{harnackBound}.
	
	\smallskip
	
	For the last term, we use the triangle inequality and \eqref{bounded M} to estimate
	\[
	\mint_{B_{\iota}}|v|^p\,\dd x
	\le C \mint_{B_{\iota}} |u-v|^p\,\dd x + C \mint_{B_{\iota}} |u|^p\,\dd x \le
	C\iota^{-n}M
	+
	C\mint_{B_{\iota}}|u|^p\,\dd x.
	\]
	Since $u(0)=0$, the Hölder estimate from \Cref{thm:intro-holder}, applied in $B_{1/4}$, yields
	\[
		\mint_{B_{\iota}}|u|^p\,\dd x
		=
		\mint_{B_{\iota}}|u(x)-u(0)|^p\,\dd x
		\leq
		C\iota^{p\alpha_0}
		\left[
		\operatorname{Tail}(u;0,1/2)
		+
		\left(
		\mint_{B_1}|u|^p\,\dd x
		\right)^{1/p}
		+
		M^{1/p}
		\right]^p.
	\]
	Taking $M_0\leq1$ and using \eqref{eqn:smallness-lem}, we obtain
	\[
	\mint_{B_{\iota}}|u|^p\,\dd x
	\leq
	C\iota^{p\alpha_0}
	\Big[
	\operatorname{Tail}(u;0,1/2)+2 \,
	\Big]^p
	\]
	so that
	\begin{equation}\label{average estimate}
		\mint_{B_{\iota}}|v|^p\,\dd x
		\leq
		C\iota^{-n}M
		+
		C\iota^{p\alpha_0}
		\Big[
		\operatorname{Tail}(u;0,1/2)+2 \,
		\Big]^p.
	\end{equation}
	
	\smallskip
	
	Next, we estimate the first term in the right-hand side of \eqref{harnackBound}, the tail of $v$. By \Cref{Tail estimate01}, applied to the nonnegative $p$-harmonic function $v$, and then by the nonlocal Harnack inequality, again using that $\operatorname{Tail}(v_-;0,R)=0$, we have
	\[
	\begin{split}
		\operatorname{Tail}(v;0,\iota/2)
		\leq
		C\sup_{B_{\iota/2}}v
		\leq
		C\sup_{B_{\iota}}v
		\leq
		C\inf_{B_{\iota}}v
		\leq
		C
		\left(
		\mint_{B_{\iota}}|v|^p\,\dd x
		\right)^{1/p}.
	\end{split}
	\]
	Combining this estimate with \eqref{average estimate}, we find
	\begin{equation}\label{Tail estimate}
		\operatorname{Tail}(v;0,\iota/2)
		\leq
		C\iota^{-n/p}M^{1/p}
		+
		C\iota^{\alpha_0}
		\Big[
		\operatorname{Tail}(u;0,1/2)+2\,
		\Big].
	\end{equation}
	
	Substituting \eqref{average estimate} and \eqref{Tail estimate} into \eqref{harnackBound}, and setting
	$
	\beta=\tfrac{(p-1)n}{sp^2},
	$
	we obtain
	\[
	\begin{split}
		\sup_{B_{1/10}}u
		\leq
		C \iota^{-n/p}M^{1/p}
		+
		C\iota^{\alpha_0}
		\Big[
		\operatorname{Tail}(u;0,1/2)+2\,
		\Big].
	\end{split}
	\]
	Thus, after increasing $C$ if necessary,
	\[
	\begin{split}
		\sup_{B_{1/10}}u
		\leq
		C \iota^{-n/p}M^{1/p}
		+
		C \iota^{\alpha_0}
		+
		C \iota^{\alpha_0}
		\operatorname{Tail}(u;0,1/2).
	\end{split}
	\]
	
	We now choose the parameters. First, fix for instance $\delta=1$. Next, choose $\iota\in(0,1/4)$, depending only on $n,s,p$, and $\varepsilon$, such that
	\[
	C\iota^{\alpha_0}
	\leq
	\min\left\{
	\frac{\varepsilon}{2},
	\frac{1}{2\cdot10^{2s}c_{n,s,p}}
	\right\}.
	\]
	Finally, choose $M_0>0$, with $M_0\leq1$, such that
	\[
	C\iota^{-n/p}M_0^{1/p}
	\leq
	\frac{\varepsilon}{2}.
	\]
	Therefore, if $M\leq M_0$, the previous estimate gives \eqref{lem:sharp-growth}, which proves the lemma.
\end{proof}

The sharp growth estimate from \Cref{thm:intro-growth} is then obtained by iterating the flatness lemma at decreasing scales. The main point is to propagate, simultaneously, a decay estimate for the supremum and a compatible decay estimate for the tail.

\begin{proof}[Proof of \Cref{thm:intro-growth}]
	Let
	$
	\varepsilon:=10^{-s}/2,
	$
	and let $c_{n,s,p}>0$ and $M_0>0$ be the constants given by \Cref{Flatness lemma}. We choose
	\[
	\tilde r :=
	\left(\frac{M_0}{M}\right)^{1/(sp)}.
	\]
	Fix $0<r<\tilde r$ such that $B_{2r}(x_0)\subset\Omega$. 
	Define
	\[
	\tilde u(x)
	:=
	\tau \, u(x_0+rx),
	\]
	where
	\[
	\tau
	:=
	\left[
	\sup_{B_r(x_0)}u
	+
	\frac{\operatorname{Tail}(u;x_0,r/2)}{10^s c_{n,s,p}}
	+
	1
	\right]^{-1}.
	\]
	Then, $\tilde u$ is a nonnegative minimizer of $\mathcal{I}_{\widetilde M}$ in $B_1$, where
	$\widetilde M
	=
	\tau^p r^{sp}M$.
	Since $\tau\leq1$ and $r<\tilde r$, we have
	$\widetilde M
	\leq
	r^{sp}M
	\leq
	M_0$.
	Moreover, $\tilde u(0)=0$, $\sup{\!}_{B_1}\tilde u\leq1$,
	and, in particular,
	\[
	\left(
	\mint_{B_1}\tilde u^p\,\dd x
	\right)^{1/p}
	\leq
	1.
	\] Furthermore, by the scaling of the tail,
	\begin{equation}\label{eqn:tailk=0}
		\operatorname{Tail}(\tilde u;0,1/2)
		=
		\tau\operatorname{Tail}(u;x_0,r/2)
		\leq
		10^s c_{n,s,p}.
	\end{equation}
	
	We claim that, for every integer $k\geq0$,
	\begin{equation}\label{induction1}
		\sup_{B_{10^{-k}}}\tilde u
		\leq
		10^{-ks},
	\end{equation}
	and
	\begin{equation}\label{induction2}
		\operatorname{Tail}(\tilde u;0,10^{-k}/2)
		\leq
		c_{n,s,p}10^{-(k-1)s}.
	\end{equation}
	The case $k=0$ follows from the definition of $\tau$ and from the estimate \eqref{eqn:tailk=0}.
	Assume now that \eqref{induction1} and \eqref{induction2} hold for some $k\geq0$ and define
	\[
	\tilde u_k(x)
	:=
	10^{ks}\tilde u(10^{-k}x).
	\]
	Then, $\tilde u_k$ is a nonnegative minimizer of $\mathcal{I}_{M_k}$ in $B_1$, with
	\[
	M_k
	=
	10^{ksp}10^{-ksp}\widetilde M
	=
	\widetilde M
	\leq
	M_0.
	\]
	Also, $\tilde u_k(0)=0$. From \eqref{induction1},
	\[
	\mint_{B_1}\tilde u_k^p\,\dd x
	=
	10^{ksp}
	\mint_{B_{10^{-k}}}\tilde u^p\,\dd x
	\leq
	1.
	\]
	Moreover, by the scaling of the tail and \eqref{induction2},
	\[
	\begin{split}
		\operatorname{Tail}(\tilde u_k;0,1/2)
		=
		10^{ks}
		\operatorname{Tail}(\tilde u;0,10^{-k}/2) \leq
		c_{n,s,p}10^s.
	\end{split}
	\]
	Applying \Cref{Flatness lemma} to $\tilde u_k$, with $\varepsilon=10^{-s}/2$, we obtain
	\[
		\sup_{B_{1/10}}\tilde u_k
		\leq
		\frac{10^{-s}}{2}
		+
		\frac{1}{2\cdot10^{2s}c_{n,s,p}}
		\operatorname{Tail}(\tilde u_k;0,1/2) \leq
		\frac{10^{-s}}{2}
		+
		\frac{10^{-s}}{2}
		=
		10^{-s}.
	\]
	Rescaling back, this gives
	\[
	\sup_{B_{10^{-(k+1)}}}\tilde u
	\leq
	10^{-(k+1)s}.
	\]
	This proves the induction step for \eqref{induction1} and	
	it remains to prove the induction step for \eqref{induction2}. Denote $\rho_k:=10^{-k}/2$; then, using the definition of the tail, the estimate just proved, and the induction hypothesis, we have
	\[
	\begin{split}
		\operatorname{Tail}(\tilde u;0,\rho_{k+1})^{p-1}
		&=
		\rho_{k+1}^{sp}
		\int_{\mathbb{R}^n\setminus B_{\rho_{k+1}}}
		\frac{\tilde u(y)^{p-1}}{|y|^{n+sp}}\,\dd y
		\\
		&=
		\rho_{k+1}^{sp}
		\int_{B_{\rho_k}\setminus B_{\rho_{k+1}}}
		\frac{\tilde u(y)^{p-1}}{|y|^{n+sp}}\,\dd y
		+
		\rho_{k+1}^{sp}
		\int_{\mathbb{R}^n\setminus B_{\rho_k}}
		\frac{\tilde u(y)^{p-1}}{|y|^{n+sp}}\,\dd y,
	\end{split}
	\]
	Since $B_{\rho_k}\subset B_{10^{-k}}$, \eqref{induction1} gives
	\[
	\begin{split}
		\rho_{k+1}^{sp}
		\int_{B_{\rho_k}\setminus B_{\rho_{k+1}}}
		\frac{\tilde u(y)^{p-1}}{|y|^{n+sp}}\,\dd y
		&\leq
		10^{-ks(p-1)}
		\rho_{k+1}^{sp}
		\int_{B_{\rho_k}\setminus B_{\rho_{k+1}}}
		\frac{1}{|y|^{n+sp}}\,\dd y \leq
		\frac{n\omega_n}{sp}
		10^{-ks(p-1)}.
	\end{split}
	\]
	On the other hand,
	\[
	\begin{split}
		\rho_{k+1}^{sp}
		\int_{\mathbb{R}^n\setminus B_{\rho_k}}
		\frac{\tilde u(y)^{p-1}}{|y|^{n+sp}}\,\dd y
		&=
		\left(\frac{\rho_{k+1}}{\rho_k}\right)^{sp}
		\operatorname{Tail}(\tilde u;0,\rho_k)^{p-1} =
		10^{-sp}
		\operatorname{Tail}(\tilde u;0,\rho_k)^{p-1}.
	\end{split}
	\]
	Since $p\geq2$, we have $1/(p-1)\leq1$, and hence the elementary inequality
	\[
	(a+b)^{1/(p-1)}
	\leq
	a^{1/(p-1)}+b^{1/(p-1)}
	\quad\text{for } a,b\geq0.
	\]
	Thus, by combining these, we obtain
	\[
	\begin{split}
		\operatorname{Tail}(\tilde u;0,\rho_{k+1})
		&\leq
		\left(\frac{n\omega_n}{sp}\right)^{1/(p-1)}
		10^{-ks} +
		10^{-\frac{sp}{p-1}}
		\operatorname{Tail}(\tilde u;0,\rho_k).
	\end{split}
	\]
	By the induction hypothesis,
	\[
	\begin{split}
		\operatorname{Tail}(\tilde u;0,\rho_{k+1})
		&\leq
		\left(\frac{n\omega_n}{sp}\right)^{1/(p-1)}
		10^{-ks} +
		c_{n,s,p}
		10^{-\frac{sp}{p-1}}
		10^{-(k-1)s}
		\\[0.3em]
		&=
		\left[
		\left(\frac{n\omega_n}{sp}\right)^{1/(p-1)}
		+
		c_{n,s,p}10^{-\frac{s}{p-1}}
		\right]
		10^{-ks}.
	\end{split}
	\] Now, the choice \eqref{constant-iteration} yields precisely \eqref{induction2} with $k+1$ in place of $k$. The induction is then complete.
	
	\smallskip
	
	We now derive the announced pointwise growth estimate for $\tilde u$. Let $x\in B_{1/2}$. If $x=0$, there is nothing to prove. Otherwise, choose $k\geq0$ such that
	\[
	10^{-(k+1)}
	<
	|x|
	\leq
	10^{-k}.
	\]
	By \eqref{induction1},
	\[
	\tilde u(x)
	\leq
	\sup_{B_{10^{-k}}}\tilde u
	\leq
	10^{-ks}
	=
	10^s10^{-(k+1)s}
	\leq
	10^s|x|^s.
	\]
	Thus, we have, for every $x\in B_{1/2}$,
	\[
	\tilde u(x)
	\leq
	C|x|^s.
	\]
	Back to the original variables, we obtain that, for every $x\in B_{r/2}(x_0)$,
	\[
	u(x)
	\leq
	C\tau^{-1}
	\frac{|x-x_0|^s}{r^s}.
	\]
	It remains to estimate $\tau^{-1}$. First, observe that
	\[
	\operatorname{Tail}(u;x_0,r/2)
	\leq
	C \, \Big(
	\sup_{B_r(x_0)}u
	+
	\operatorname{Tail}(u;x_0,r) \,
	\Big).
	\]
	Moreover, since minimizers are weak subsolutions, we may apply \Cref{Boundedness} with $R=2r$ and $\delta=1$. Since $u\geq0$, this yields
	\[
	\sup_{B_r(x_0)}u
	\leq
	C
	\left[
	\operatorname{Tail}(u;x_0,r)
	+
	\left(
	\mint_{B_{2r}(x_0)}u^p\,\dd x
	\right)^{1/p}
	\right].
	\]
	Combining the previous two estimates, we obtain
	\[
	\tau^{-1}
	\leq
	C
	\left(
	\operatorname{Tail}(u;x_0,r)
	+
	\left(
	\mint_{B_{2r}(x_0)}u^p\,\dd x
	\right)^{1/p}
	+
	1
	\right).
	\]
	This ends the proof.
\end{proof}

{\noindent{\bf Acknowledgments.} R. Costa was partly supported by Ph.D. fellowships from the Conselho Nacional de Desenvolvimento Científico e Tecnológico (CNPq), Brazil. This study was financed in part by the Coordenação de Aperfeiçoamento de Pessoal de Nível Superior - Brasil (CAPES) - Finance Code 001.}

%

\end{document}